\newcommand{\rrvert}{\vert}
\newcommand{\llvert}{\vert}
\newtheorem{thm}{Theorem}[section]
\newtheorem{lem}{Lemma}[section]
\theoremstyle{definition}
\newtheorem{zau}{Remark}[section]
\newtheorem{ozn}{Definition}[section]
\newtheorem{ex}{Example}[section]
\DeclareMathOperator{\pr}{\mathsf P}
\DeclareMathOperator{\M}{\mathsf E}
\newcommand{\R}{\mathbb{R}}
\newcommand{\N}{\mathbb{N}}
\begin{document}
\begin{frontmatter}

%
%
%
%
%

\title{Asymptotic behavior of functionals of the solutions to~inhomogeneous
It\^{o} stochastic differential equations with nonregular dependence on
parameter}

\author{\inits{G.}\fnm{Grigorij}\snm{Kulinich}}\email{zag\_mat@univ.kiev.ua}
\author{\inits{S.}\fnm{Svitlana}\snm{Kushnirenko}\corref{cor1}}\email{bksv@univ.kiev.ua}
\cortext[cor1]{Corresponding author.}
\address{Taras Shevchenko National University of Kyiv, 64/13,
Volodymyrska Street, 01601~Kyiv, Ukraine}

\markboth{G. Kulinich, S. Kushnirenko}{Asymptotic behavior of functionals}

\begin{abstract}
The asymptotic behavior, as $T\to\infty$, of some functionals of the form
$I_T(t)=F_T(\xi_T(t))+\int_{0}^{t} g_T(\xi_T(s))\, dW_T(s)$, $
t\ge0$ is studied. Here $\xi_T(t)$ is the solution to the
time-inhomogeneous It\^{o} stochastic differential equation
\[
d\xi_T (t)=a_T \bigl(t,\xi_T(t) \bigr)
\,dt+dW_T(t),\quad t\ge0,\ \xi_T (0)=x_0,
\]
$T>0$ is a parameter, $a_T (t,x), x\in\mathbb{R}$ are measurable functions,
$ |a_T(t,x)| \leq C_T$ for all $x\in\mathbb{R}$ and $ t\ge0$, $W_T(t)$ are
standard Wiener processes, $F_T(x), x\in\mathbb{R}$ are continuous functions,
$g_T(x), x\in\mathbb{R}$ are measurable locally bounded functions, and everything
is real-valued. The explicit form of the limiting processes for $I_T(t)$ is
established under nonregular dependence of $a_T (t,x)$ and $g_T(x)$ on the
parameter $T$.
\end{abstract}

\begin{keywords}
\kwd{Diffusion-type processes}
\kwd{asymptotic behavior of functionals}
\kwd{nonregular dependence on the parameter}
\end{keywords}
%
%
\begin{keywords}[2010]
\kwd{60H10} \kwd{60F17} \kwd{60J60}\vspace*{24pt}
\end{keywords}


\received{29 May 2017}
\revised{6 August 2017}
\accepted{10 August 2017}
\publishedonline{22 September 2017}
\end{frontmatter}

\section{Introduction}

Consider the time-inhomogeneous It\^{o} stochastic differential equation
%
\begin{equation}
\label{o1} d\xi_T (t)=a_T \bigl(t,
\xi_T(t) \bigr)\,dt+dW_T(t),\quad t\ge0,\
\xi_T (0)=x_0,
\end{equation}
where $T>0$ is a parameter, $a_T (t,x), x\in\R$ are real-valued measurable
functions such that $ \llvert a_T(t,x) \rrvert \leq L_T$ for all $(t,x)$ and some
family of constants $L_T>0$, and $W_T=\{W_T(t), t\geq 0\}, T>0$ is a family
of standard Wiener processes defined on a complete probability space
$(\varOmega,\Im, \pr)$.

It is known from Theorem~4 in \cite{paper1} that for any $T>0$ and $x_0\in\R$
equation~(\ref{o1}) possesses a unique strong solution $\xi _T=\{\xi_T(t),
t\geq0\}$.

In this paper, we study the weak convergence, as $T\to\infty$, of the
processes $I_T(t)=F_T(\xi_T(t))+\int_{0}^{t} g_T(\xi_T(s))\, dW_T(s)$,
where $\xi_T(t)$ is the solution to stochastic differential equation~(\ref{o1}), $ F_T (x) $ is a family of continuous real-valued functions, $
g_T (x) $ is a family of measurable, locally bounded real-valued functions.
All the results about asymptotic behavior are obtained under the condition
which provides certain proximity
of the coefficients $a_T (t,x)$ of equation~(\ref{o1}) to some measurable
functions $\hat{a}_T (x)$. In such situation, the limit processes, obtained
under condition $T\to\infty$, are some functionals of the limits of the
solutions $\hat{\xi}_T(t)$ to the homogeneous stochastic differential
equations
%
\begin{equation}
\label{o2} d\hat{\xi}_T (t)=\hat{a}_T \bigl(\hat{
\xi}_T(t) \bigr)\,dt+dW_T(t).
\end{equation}

The present paper generalizes similar results from \cite{paperL} for the
unique strong solutions $\hat{\xi}_T$ to homogeneous stochastic differential
equations~(\ref{o2}) to the case for the solutions ${\xi }_T(t)$ to
inhomogeneous equations~(\ref{o1}). Some results about solutions
$\hat{\xi}_T$ to homogeneous equations~(\ref{o2}), which obtained in
\cite{paperL}, have been extended to solutions $\xi_T$ to inhomogeneous
equations~(\ref{o1}). Under the certain proposed conditions, which present a
novelty in comparison with \cite{paperL}, we prove that the asymptotic
behavior of the solutions and some functionals of the solutions to
inhomogeneous It\^{o} stochastic differential equations~(\ref{o1}) is the
same as that for the solutions to homogeneous It\^{o} stochastic differential
equations~(\ref{o2}). The present paper also complements results of paper
\cite{paper10}. Moreover, we assume that the drift coefficient $a_T (t,x)$ in
equation~(\ref{o1}) can have nonregular dependence on the parameter $T$. For
example, the drift coefficient $a_T (t,x)$ can tend, as $T\to \infty$, to
infinity
at some points $x_k$ and at some points $t_k$ as well, or it can have
degeneracies of some other types. Such a nonregular dependence on $T$ of the
coefficients in equation~(\ref{o1}) appeared for the first time in \cite
{paper4} and \cite{paper3}, where the limit behavior of the normalized
unstable solution of It\^{o} stochastic differential equation, as $t\to\infty$,
was investigated for homogeneous equations. In those papers, a special
dependence of the coefficients $a_T(x)=\sqrt{T}a(x\sqrt{T})$ on the parameter
$T$ was considered with $a(x)\in L_1 (\mathbb{R} )$. The special
dependence of the coefficients $a_T (t,x)=\sqrt{T}a(tT,x\sqrt{T})$ on the
parameter $T$ was considered in \cite{paper44} for inhomogeneous stochastic
differential equations~(\ref{o1}).

A more detailed review of the known results in this area is presented, for
example, in~\cite{paper88} and \cite{paperL}.

The paper is organized as follows. In Section~2, we set the notations and
formulate basic definitions. Section~3 contains the statements of the main
results. In Section~4, they are proved. Auxiliary results are collected in
Section~5. Section~6 gives examples.

\section{Main definitions}

In what follows we denote by $C,L,N,C_N,L_N$ any constants that do
not depend on $T$, $x$ and $t$. To formulate and prove the main results we
introduce the functions of the form
%
\begin{equation}
\label{o3} f_T(x)=\int_{0}^{x}\exp \biggl\{-2
\int_{0}^{u} \hat{a}_T(v)\, dv \biggr\}\,du,\quad
T>0.
\end{equation}

Throughout the paper we use the following notations:
\begin{eqnarray*}
&\displaystyle \beta_T^{(1)} (t)=\int_{0}^{t}
g_T \bigl(\xi_T(s) \bigr)\,ds, \qquad \displaystyle
\beta _T^{(2)}(t)=\int_{0}^{t}
g_T \bigl(\xi_T(s) \bigr)\,dW_T(s),&
\\
&\displaystyle I_T(t)=F_T \bigl(\xi_T(t)
\bigr)+\int_{0}^{t}g_T \bigl(\xi_T(s) \bigr)
\,dW_T(s),
\end{eqnarray*}
where $\xi_T$ and $W_T$ are related via equation~(\ref{o1}), $g_T (x)$ is a
family of measurable, locally bounded real-valued functions, $F_T (x)$ is a
family of continuous real-valued functions.

\begin{ozn}\label{oz2}
We say that a family of stochastic processes
$\zeta_T=\{\zeta_T(t), t\geq0\}$
weakly converges, as $T\to\infty$, to a process
$\zeta=\{\zeta(t), t\geq0\}$ if, for any $L>0$, the measures $\mu_T[0,L]$
generated by the processes $\zeta_T$ on the interval $[0,L]$ weakly
converge to the measure $\mu[0,L]$ generated by the process $\zeta$
considered on the interval $[0,L]$.
\end{ozn}

To study the weak convergence, as $T\to\infty$, of the processes
$I_T(t)\,{=}\,F_T(\xi_T(t))+\int_{0}^{t} g_T(\xi_T(s))\,dW_T(s)$, where
$\xi_T$ is the solution to stochastic differential equation~(\ref {o1}), we
suppose additionally that the drift coefficients satisfy the following
assumption:
there exists a family of measurable, locally bounded functions $\hat {a}_T
(x)$ such that for any $L>0$
\[
\lim_{T\to\infty} \int_{0}^{L}\sup_{x} \bigl\llvert
a_T(t,x)-\hat{a}_T (x) \bigr\rrvert \,dt= 0.
\tag{$A_0$}\label{A0}
\]

Note, that due to condition~(\ref{A0}), some results about solutions $\hat
{\xi}_T$ to homogeneous equations~(\ref{o2}), which are obtained in
\cite{paperL}, have been extended to solutions $\xi_T$ to inhomogeneous
equations~(\ref{o1}). Therefore, by analogy to the paper \cite{paperL},
consider equations~(\ref{o1}) from the class $K (G_T  )$.

\begin{ozn}\label{oz1}
The class of equations of the form~(\ref{o1}) will be denoted by $K(G_T)$,
if there exist families of functions $\hat{a}_T (x)$ and $G_T(x)$,
$x\in\R$, such that:
\begin{enumerate}
\item[1)] $\hat{a}_T (x)$ are measurable locally bounded real-valued functions,
satisfying condition~(\ref{A0});
\item[2)] $G_T(x)$ have continuous derivatives $G_T'(x)$ and locally integrable
second derivatives $G_T''(x)$ a.e. with respect to the Lebesgue measure
such that, for all $T>0$, $x\in\R$ and $ t\ge 0$, for some constant $C>0$ the
following inequalities hold:
\begin{align}
\biggl[ G_T'(x)a_T(t,x)+
\frac{1}{2}G_T''(x) \biggr]^2+
\bigl[ G_T'(x) \bigr]^2 &\leq C \bigl[1+
\bigl\llvert G_T(x)\bigr\rrvert ^2 \bigr], \notag
\\
\bigl\llvert G_T(x_0)\bigr\rrvert &\leq C;
\tag{$A_1$}\label{A1}
\end{align}

\item[3)] there exist constants $C>0$ and $\alpha>0$ such that, for all $x\in \R$,
\[
\bigl\llvert G_T(x)\bigr\rrvert \geq C \llvert x\rrvert
^{\alpha};
\]
\item[4)] there exist a bounded function $\psi(|x|)$ and a constant
$m\geq0$ such that $\psi (|x| )\to0$ as $|x|\to0$ and, for all
$x\in\R$ and $T>0$ and for any measurable bounded set $B$, the following
inequality holds:
\[
\biggl\llvert f_T'(x)\int_{0}^{x}
\frac{\chi_B(G_T(u) )}{f'_T(u)}\,du \biggr\rrvert \leq\psi \bigl(\lambda(B) \bigr) \bigl[1+
\llvert x\rrvert ^m \bigr], \tag{$A_2$}\label{A2}
\]
where $\chi_B(x)$ is the indicator function of a set $B$, $\lambda(B)$ is the
Lebesgue measure of $B$, $f_T'(x)$ is the derivative of the function $f_T(x)$
defined by equality~(\ref{o3}).
\end{enumerate}
\end{ozn}

Assume that, for certain locally bounded functions $ q_T (x) $ and any
constant \mbox{$N>0$}, the following condition holds:
\[
\lim_{T\to\infty}\sup_{\llvert x\rrvert \leq N} \biggl\llvert f_T'(x)
\int_{0}^{x}\frac{q_T(v)}{f'_T(v)}\, dv \biggr\rrvert =0.
\tag{$A_3$}\label{A3}
\]

\section{Statement of the main results}

We are in position to obtain the main result (Theorem~\ref{th1}) concerning
the weak compactness of stochastic processes $\zeta_T=\{\zeta
_T(t)=G_T(\xi_T(t)), t\geq0\}$, and use it in further investigation of
asymptotic behavior of the solutions (Theorem~\ref{th2}) and some functionals
of the solutions (Theorems~\ref{th3}--\ref{th7}) to inhomogeneous
It\^{o} stochastic differential equations~(\ref{o1}).

\begin{thm}\label{th1}
Let $\xi_T$ be a solution to equation~(\ref{o1}) and let there exist a family
of continuous functions $G_T(x)$, $x\in\R$ for which the derivative
$G_T'(x)$ is continuous and the second derivative $G_T''(x)$ exists
a.e. with respect to the Lebesgue measure and is locally integrable. Let the
functions $G_T(x)$ satisfy assumption~(\ref{A1}), for all $T>0$, $t\geq0$,
$x\in\R$. Then the family of the processes
$\zeta_T=\{\zeta_T(t)=G_T(\xi_T(t)), t\geq0\}$ is weakly compact.
\end{thm}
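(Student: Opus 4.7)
\smallskip

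\noindent\textbf{Proof plan.} My strategy is to represent $\zeta_T$ as a semimartingale via It\^o's formula, use assumption~(\ref{A1}) to close the coefficients' growth in terms of $\zeta_T$ itself, establish uniform moment bounds by Gronwall, and conclude tightness in $C[0,L]$ via the Kolmogorov--Chentsov criterion. Since each individual $\zeta_T$ has continuous trajectories, weak compactness in the sense of Definition~\ref{oz2} reduces to tightness on $C[0,L]$ for every $L>0$.

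\smallskip

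\noindent\emph{Step 1: It\^o decomposition.} Because the diffusion coefficient of $\xi_T$ is constantly~$1$, Krylov's estimate is available, which lets me apply the It\^o--Krylov formula to $G_T$, whose first derivative is continuous and whose second derivative is only locally integrable. This yields
\[
\zeta_T(t)=G_T(x_0)+\int_{0}^{t}\alpha_T(s)\,ds+\int_{0}^{t}\beta_T(s)\,dW_T(s),
\]
with $\alpha_T(s)=G_T'(\xi_T(s))a_T(s,\xi_T(s))+\frac{1}{2}G_T''(\xi_T(s))$ and $\beta_T(s)=G_T'(\xi_T(s))$. Assumption~(\ref{A1}) translates into the key self-bounding estimate $\alpha_T(s)^2+\beta_T(s)^2\le C(1+\llvert \zeta_T(s)\rrvert^2)$, and an analogous bound with $4$ in place of $2$ (since $(1+y^2)^2\le 2(1+y^4)$).

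\smallskip

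\noindent\emph{Step 2: Uniform $L^{p}$ bounds.} Setting $\phi_T(t)=\M\bigl[\sup_{s\le t}\llvert \zeta_T(s)\rrvert^{2}\bigr]$, I combine the Cauchy--Schwarz bound on the drift integral, Doob's $L^{2}$-inequality on the martingale term, and the self-bounding estimate from Step~1 to obtain
\[
\phi_T(t)\le K_1+K_2\int_{0}^{t}\phi_T(s)\,ds,\qquad t\in[0,L],
\]
with $K_1,K_2$ independent of $T$ (here I use $\llvert G_T(x_0)\rrvert \le C$). Gronwall's lemma gives $\phi_T(L)\le M_L$ uniformly in $T$. Repeating with the fourth-power self-bound and the Burkholder--Davis--Gundy inequality produces $\M\bigl[\sup_{s\le L}\llvert \zeta_T(s)\rrvert^{4}\bigr]\le M_L'$ uniformly in $T$.

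\smallskip

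\noindent\emph{Step 3: Kolmogorov tightness.} For $0\le s<t\le L$, Cauchy--Schwarz on the drift and BDG on the martingale give
\[
\M\llvert \zeta_T(t)-\zeta_T(s)\rrvert ^{4}\le K(t-s)^{3}\int_{s}^{t}\M\alpha_T(u)^{4}\,du+K(t-s)\int_{s}^{t}\M\beta_T(u)^{4}\,du.
\]
Inserting the fourth-moment bound from Step~2, both integrands are uniformly controlled, so $\M\llvert \zeta_T(t)-\zeta_T(s)\rrvert ^{4}\le K_L(t-s)^{2}$. Kolmogorov--Chentsov yields tightness of $\{\zeta_T\}$ in $C[0,L]$, and Prokhorov's theorem (together with the arbitrariness of $L$) gives the claimed weak compactness.

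\smallskip

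\noindent\emph{Main obstacle.} The substantive point is the validity of It\^o's formula in Step~1, since $G_T''$ is only locally integrable; this is where the non-degeneracy of the diffusion and Krylov's estimate are essential. Once the It\^o decomposition is in hand, the delicate role of (\ref{A1}) is that it is calibrated precisely so that both $\alpha_T^2$ and $\beta_T^2$ are controlled by $\llvert \zeta_T\rrvert ^2$ (rather than by $\llvert \xi_T\rrvert ^2$, which we have no direct handle on), allowing the Gronwall loop to close uniformly in $T$.
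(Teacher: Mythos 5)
Your proposal is correct and takes essentially the same route as the paper: the It\^{o}--Krylov decomposition of $\zeta_T$, the self-bounding estimate supplied by (\ref{A1}), Gronwall plus martingale/BDG inequalities for uniform second and fourth moments, the increment bound $\M\vert\zeta_T(t)-\zeta_T(s)\vert^4\le K_L(t-s)^2$, and tightness on $[0,L]$ (the paper packages this last step through Skorokhod's subsequence principle and the modulus-of-continuity condition rather than Kolmogorov--Chentsov, but the inputs are identical). The one point the paper treats more carefully is the a priori finiteness of $\M\sup_{s\le t}\vert\zeta_T(s)\vert^2$ needed before Gronwall, which it secures by first estimating the truncated process $\zeta_T(t)\chi_N(t)$ and then letting $N\uparrow\infty$.
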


\begin{thm}\label{th2}
Let $\xi_T$ be a solution to equation~(\ref{o1}) from the class $K (G_T)$
and\break $G_T(x_0)\to y_0$, as $T\to\infty$. Assume that there exist
measurable locally bounded functions $a_0(x)$ and $\sigma_0(x)$ such that:
%
\begin{enumerate}
\item[1)] the functions
\begin{align*}
q_T^{(1)}(x)&=G_T'(x)\hat{a}_T(x)+\frac{1}{2}G_T''(x)-a_0
\bigl(G_T(x) \bigr),
\\
q_T^{(2)}(x)&= \bigl[G_T'(x)
\bigr]^2- \sigma_0^2 \bigl(G_T(x)
\bigr),
\end{align*}
satisfy assumption~(\ref{A3});\vadjust{\eject}
\item[2)] the It\^{o} equation
\begin{equation}
\zeta(t)=y_0+\int_{0}^{t}a_0 \bigl(
\zeta(s) \bigr)\,ds+\int_{0}^{t}\sigma_0 \bigl(
\zeta(s) \bigr)\,d\hat{W}(s)\label{o33}
\end{equation}
has a unique weak solution $(\zeta, \hat{W} )$.
\end{enumerate}
Then the stochastic processes $\zeta_T=G_T(\xi_T(t))$ weakly
converge, as $T\to\infty$, to the solution $\zeta$ to equation~(\ref{o33}).
\end{thm}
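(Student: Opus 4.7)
The plan is to apply It\^{o}'s formula to $G_T(\xi_T(t))$ and then show that the resulting semimartingale decomposition converges, as $T\to\infty$, to the It\^{o} equation~(\ref{o33}), invoking weak uniqueness in item~2) to identify the limit.

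First I would justify It\^{o}'s formula for $G_T$: since $G_T'$ is continuous and $G_T''$ exists a.e.\ and is locally integrable, a standard approximation by $C^2$ functions together with the bound $|a_T(t,x)|\le L_T$ and the fact that $\xi_T$ is a diffusion with unit diffusion coefficient yields
\[
\zeta_T(t)=G_T(x_0)+\int_{0}^{t}\Big[G_T'\bigl(\xi_T(s)\bigr)a_T\bigl(s,\xi_T(s)\bigr)+\tfrac{1}{2}G_T''\bigl(\xi_T(s)\bigr)\Big]\,ds+\int_{0}^{t}G_T'\bigl(\xi_T(s)\bigr)\,dW_T(s).
\]
I would then rewrite the drift by adding and subtracting $G_T'(\xi_T)\hat{a}_T(\xi_T)$ and the quantity $a_0(G_T(\xi_T))=a_0(\zeta_T)$:
\[
\zeta_T(t)=G_T(x_0)+\int_{0}^{t}a_0\bigl(\zeta_T(s)\bigr)\,ds+R_T^{(0)}(t)+R_T^{(1)}(t)+M_T(t),
\]
where $R_T^{(0)}(t)=\int_{0}^{t}G_T'(\xi_T(s))\bigl[a_T(s,\xi_T(s))-\hat{a}_T(\xi_T(s))\bigr]\,ds$, $R_T^{(1)}(t)=\int_{0}^{t}q_T^{(1)}(\xi_T(s))\,ds$, and $M_T$ is the stochastic integral, whose predictable quadratic variation is $\langle M_T\rangle(t)=\int_{0}^{t}\sigma_0^2(\zeta_T(s))\,ds+\int_{0}^{t}q_T^{(2)}(\xi_T(s))\,ds$.

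The heart of the argument is then to show that $R_T^{(0)}$, $R_T^{(1)}$ and $\int_{0}^{t}q_T^{(2)}(\xi_T(s))\,ds$ all converge to $0$ in probability uniformly on compacts. The term $R_T^{(0)}$ is controlled by assumption~(\ref{A0}) combined with the bound $|G_T'(x)|\le C(1+|G_T(x)|)$ from~(\ref{A1}) and the tightness (hence stochastic boundedness on compacts in $t$) of $\zeta_T=G_T(\xi_T)$ provided by Theorem~\ref{th1}. The terms with $q_T^{(1)}$ and $q_T^{(2)}$ are the main obstacle: here I would rely on an auxiliary averaging-type lemma (of the kind placed in Section~5) which asserts that for any locally bounded $q_T$ satisfying~(\ref{A3}), one has $\int_{0}^{t}q_T(\xi_T(s))\,ds\to 0$ in probability on compacts. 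This lemma is exactly what motivates the appearance of $f_T$ in~(\ref{o3}) and of the condition~(\ref{A2}), since the standard technique is to write, via the It\^{o} formula applied to the primitive $Q_T(x)=f_T'(x)\int_{0}^{x}q_T(v)/f_T'(v)\,dv$ of $q_T$ with respect to the scale of $\hat\xi_T$, the integral $\int_{0}^{t}q_T(\xi_T(s))\,ds$ as a boundary term $Q_T(\xi_T(t))-Q_T(x_0)$ plus a stochastic integral of $Q_T'(\xi_T(s))$; the supremum in~(\ref{A3}) then kills the boundary term on compact $x$-sets, while~(\ref{A2}) provides the moment control needed to pass to the limit uniformly in $T$ using the tightness from Theorem~\ref{th1}. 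The replacement of $a_T$ by $\hat a_T$ in the scale argument is again accommodated by~(\ref{A0}).

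With these three remainders shown to vanish, Theorem~\ref{th1} gives tightness of $\{\zeta_T\}$ in $C[0,L]$ for every $L>0$. For any weak subsequential limit $\zeta$, the decomposition above and standard martingale convergence (together with the continuity of $a_0$ and $\sigma_0^2$ on compacts and a localization to handle their local boundedness) identify $\zeta$ as a solution of the martingale problem associated with~(\ref{o33}) started at $y_0$, hence as a weak solution of~(\ref{o33}). The weak uniqueness in hypothesis~2) implies that the entire family $\zeta_T$ converges weakly to $\zeta$, completing the proof.
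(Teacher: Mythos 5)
Your skeleton is the paper's own: the It\^{o} formula applied to $G_T(\xi_T)$, the decomposition into the drift $\int_0^t a_0(\zeta_T(s))\,ds$, the remainder $R_T^{(0)}$ killed by~(\ref{A0}) together with~(\ref{A1}) and the bounds of Theorem~\ref{th1}, the remainders with $q_T^{(1)},q_T^{(2)}$ killed by the averaging Lemma~\ref{lm2} built on the scale function $f_T$ and assumption~(\ref{A3}), and the endgame via tightness, identification of subsequential limits and weak uniqueness. (A small misattribution: in that averaging lemma the moment control comes from~(\ref{A3}), (\ref{A0}), condition~3) of Definition~\ref{oz1} and the moment bound of Theorem~\ref{th1}, not from~(\ref{A2}); the real role of~(\ref{A2}) is elsewhere, see below.)

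The genuine gap is in your limit-identification step, where you invoke ``the continuity of $a_0$ and $\sigma_0^2$ on compacts.'' The theorem assumes only that $a_0$ and $\sigma_0$ are measurable and locally bounded, and for merely measurable $g$ the convergence of $\zeta_{T_n}$ to $\zeta$ (even uniformly on compacts in probability) does not by itself yield $\int_0^t g(\zeta_{T_n}(s))\,ds\to\int_0^t g(\zeta(s))\,ds$: the limiting process could a priori spend positive time on the discontinuity/bad set of $g$. This is exactly where the class $K(G_T)$ must be used a second time. The paper first upgrades, via Skorokhod's representation and Prokhorov's lemma, to uniform convergence in probability of versions $\tilde\zeta_{T_n}$ to $\tilde\zeta$, and then applies Lemma~\ref{lm3}, which asserts convergence of the additive functionals $\int_0^t g(\zeta_T(s))\,ds$ for every measurable locally bounded $g$; that lemma in turn rests on the occupation-time estimate of Lemma~\ref{lm1}, i.e.\ precisely on condition~(\ref{A2}), which guarantees that the processes $\zeta_T=G_T(\xi_T)$ (and hence the limit) spend uniformly little time in sets of small Lebesgue measure. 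Without this occupation-measure argument your passage to the limit in the drift term and in the quadratic characteristic $\int_0^t\sigma_0^2(\zeta_{T_n}(s))\,ds$ is not justified under the stated hypotheses; with it, the rest of your argument (martingale-problem identification, or equivalently the paper's explicit construction of $\hat W$, followed by weak uniqueness) goes through as you describe.
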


\begin{thm}\label{th3}
Let $\xi_T$ be a solution to equation~(\ref{o1}) from the class $K  (G_T )$ and let assumptions of Theorem~\ref{th2} hold. Assume that for
measurable and locally bounded functions $g_T(x)$ there exists measurable and
locally bounded function $g_0(x)$ such that the function
\[
q_T(x)=g_T(x)- g_0 \bigl(G_T(x)
\bigr)
\]
satisfies assumption~(\ref{A3}). Then the stochastic processes $ \beta _T^{(1)}
(t)=\int_{0}^{t} g_T(\xi_T(s))\,ds$ weakly converge, as $T\to\infty$,
to the process
\[
\beta^{(1)} (t)=\int_{0}^{t} g_0 \bigl(\zeta(s)
\bigr)\,ds,
\]
where $\zeta$ is a solution to equation~(\ref{o33}).
\end{thm}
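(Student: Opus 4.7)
The plan is to decompose the integrand via the hypothesis, reduce the theorem to a vanishing residual plus a continuous functional of $\zeta_T=G_T(\xi_T)$, and then invoke Theorem~\ref{th2}. Writing $g_T(\xi_T(s))=g_0(\zeta_T(s))+q_T(\xi_T(s))$ one gets
\[
\beta_T^{(1)}(t)=\int_0^t g_0(\zeta_T(s))\,ds+R_T(t),\qquad R_T(t):=\int_0^t q_T(\xi_T(s))\,ds.
\]
Thus it suffices to prove (i) $\sup_{t\le L}|R_T(t)|\to 0$ in probability for every $L>0$, and (ii) that the integral functional of $\zeta_T$ passes to the limit jointly with the convergence $\zeta_T\Rightarrow\zeta$ supplied by Theorem~\ref{th2}.

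For (i) I would build the auxiliary function $\Phi_T$ defined by $\Phi_T(0)=0$ and
\[
\Phi_T'(x)=2f_T'(x)\int_0^x\frac{q_T(v)}{f_T'(v)}\,dv.
\]
Using $f_T''(x)=-2\hat a_T(x)f_T'(x)$ one checks that $\tfrac12\Phi_T''(x)+\hat a_T(x)\Phi_T'(x)=q_T(x)$ a.e., and $\Phi_T''$ is locally integrable because $q_T$ and $1/f_T'$ are locally bounded. The It\^o--Krylov formula applied to $\Phi_T(\xi_T(t))$ then yields
\[
R_T(t)=\Phi_T(\xi_T(t))-\Phi_T(x_0)-\int_0^t\Phi_T'(\xi_T(s))\bigl[a_T(s,\xi_T(s))-\hat a_T(\xi_T(s))\bigr]\,ds-\int_0^t\Phi_T'(\xi_T(s))\,dW_T(s).
\]
Assumption~(\ref{A3}) forces $\sup_{|x|\le N}|\Phi_T'(x)|\to 0$ and, after integration, also $\sup_{|x|\le N}|\Phi_T(x)|\to 0$. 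The tightness of $\{\xi_T\}$ follows from Theorem~\ref{th1} together with the lower bound $|G_T(x)|\ge C|x|^\alpha$ of Definition~\ref{oz1}, so one can localize to $\{\sup_{s\le L}|\xi_T(s)|\le N\}$ with probability close to one, uniformly in $T$. On this event the first two terms are uniformly small; the drift-mismatch integral is bounded by $\sup_{|x|\le N}|\Phi_T'(x)|\cdot\int_0^L\sup_x|a_T(s,x)-\hat a_T(x)|\,ds$ and vanishes in view of (\ref{A0}); the stochastic integral is controlled by Doob's inequality and the bound $\M\int_0^{L\wedge\tau_N}|\Phi_T'(\xi_T(s))|^2\,ds\le L\sup_{|x|\le N}|\Phi_T'(x)|^2$, where $\tau_N$ is the exit time of $\xi_T$ from $[-N,N]$.

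For (ii) I would use Skorokhod's representation to realize $\zeta_T\Rightarrow\zeta$ as almost-sure uniform convergence on $[0,L]$. If $g_0$ were continuous, the functional $\eta\mapsto\int_0^\cdot g_0(\eta(s))\,ds$ would be continuous on $C[0,L]$ and the conclusion would be immediate. For a merely measurable locally bounded $g_0$, one approximates it by continuous $g_0^{(n)}$ in $L^1$ relative to the occupation measure of the (weakly unique) diffusion $\zeta$ solving (\ref{o33}) and lets $n\to\infty$. The main obstacle is step (i): equipping $\Phi_T$ with exactly the regularity needed to apply It\^o's formula under merely measurable $\hat a_T$, and converting the pointwise supremum in (\ref{A3}) into a uniform estimate along the random trajectories of $\xi_T$.
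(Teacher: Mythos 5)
Your overall plan and your step (i) coincide with the paper's argument: the decomposition $g_T=g_0(G_T(\cdot))+q_T$, the auxiliary function $\Phi_T$ with $\Phi_T'(x)=2f_T'(x)\int_0^x q_T(v)/f_T'(v)\,dv$, the It\^{o} formula, the drift-mismatch term killed by (\ref{A0}), and the stochastic integral controlled by martingale inequalities after localization is exactly the paper's Lemma~\ref{lm2}, so the vanishing of $R_T$ is handled correctly and in essentially the same way.

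The gap is in step (ii). After the Skorokhod representation you must show $\sup_{t\le L}\bigl|\int_0^t g_0(\zeta_T(s))\,ds-\int_0^t g_0(\zeta(s))\,ds\bigr|\to 0$ for a merely measurable, locally bounded $g_0$, and you propose to approximate $g_0$ by continuous functions in $L^1$ with respect to the occupation measure of the limit diffusion $\zeta$ only. That controls the limit-side error $\int_0^t (g_0-g_0^{(n)})(\zeta(s))\,ds$, but not the prelimit error $\int_0^t (g_0-g_0^{(n)})(\zeta_T(s))\,ds$, which must be made small uniformly in $T$ before you may interchange the limits $T\to\infty$ and $n\to\infty$; uniform convergence $\zeta_T\to\zeta$ gives no information on the time $\zeta_T$ spends in the set where $g_0$ is badly approximated. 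Moreover, the occupation measure of $\zeta$ itself need not be absolutely continuous, since $\sigma_0$ is only measurable and locally bounded and may degenerate, so even the limit-side approximation is not justified as stated. The paper closes precisely this hole with Lemma~\ref{lm1}: condition 4) of Definition~\ref{oz1} (assumption (\ref{A2})) yields $\int_0^L\pr\{G_T(\xi_T(s))\in B\}\,ds\le C_L\,\psi(\lambda(B))$ uniformly in $T$, i.e.\ a uniform bound on the occupation measures of the prelimit processes $\zeta_T$ (and, in the limit, of $\zeta$), and this bound is what drives Lemma~\ref{lm3}, the statement your step (ii) actually needs. Your proposal never invokes assumption (\ref{A2}) at all, which is the tell-tale sign of the gap: without it, or an equivalent uniform occupation estimate for the $\zeta_T$, the passage from continuous to measurable $g_0$ does not go through.
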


\begin{thm}\label{th4}
Let $\xi_T$ be a solution to equation~(\ref{o1}) from the class $K  (G_T
 )$, and let the assumptions of Theorem~\ref{th2} hold. Assume that, for
measurable and locally bounded functions $g_T(x)$, there exists a measurable
locally bounded function $g_0(x)$ such that
\begin{align}
\biggl\llvert f_T'(x) \int_{0}^{x}
\frac{g_T(v)}{f'_T(v)}\, dv \biggr\rrvert \chi_{|x|\leq N}&\leq C_N,
\notag
\\
\lim_{T\to\infty}\sup_{|x|\leq N} \biggl\llvert f_T'(x)
\int_{0}^{x}\frac{g_T(v)}{f'_T(v)}\,dv-g_0
\bigl(G_T(x) \bigr)G_T'(x) \biggr\rrvert
&=0\tag{$A_4$}\label{A4}
\end{align}
for all $N>0$. Then the stochastic processes $ \beta_T^{(1)} (t)=\int_{0}^{t} g_T(\xi_T(s))\,ds$ weakly converge, as $T\to\infty$, to the
process
\[
\tilde{\beta}^{(1)} (t)=2 \biggl(\int_{y_0}^{\zeta(t)}
g_0(x)\, dx-\int_{0}^{t} g_0 \bigl(\zeta(s)
\bigr)\sigma_0 \bigl(\zeta(s) \bigr)\,d\hat {W}(s) \biggr),
\]
where $ (\zeta, \hat{W} )$ is a solution to equation~(\ref{o33}).
\end{thm}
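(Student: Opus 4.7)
The plan is to reduce $\beta_T^{(1)}(t)$ to a combination of a deterministic function of $\xi_T(t)$, a stochastic integral in $W_T$, and a negligible remainder, by choosing an auxiliary function $H_T$ that kills the homogeneous drift. Since $f_T'(x)=\exp\{-2\int_0^x\hat a_T(v)\,dv\}$ satisfies $\hat a_T f_T'+\tfrac12 f_T''=0$, the linear ODE $\hat a_T(x)H_T'(x)+\tfrac12 H_T''(x)=g_T(x)$ can be solved explicitly by setting
\[
H_T(x)=2\int_{0}^{x} f_T'(u)\int_{0}^{u}\frac{g_T(v)}{f_T'(v)}\,dv\,du,
\]
so that $\tfrac12 H_T'(x)=f_T'(x)\int_0^x g_T(v)/f_T'(v)\,dv$, which is precisely the quantity controlled by assumption~(\ref{A4}).

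Next I would apply Itô's formula to $H_T(\xi_T(t))$, using the fact that $H_T''$ exists a.e. and is locally integrable (Krylov's generalized Itô formula, as is standard in this framework). The drift-term identity gives
\[
H_T\bigl(\xi_T(t)\bigr)-H_T(x_0)=\int_0^t g_T\bigl(\xi_T(s)\bigr)\,ds+R_T(t)+\int_0^t H_T'\bigl(\xi_T(s)\bigr)\,dW_T(s),
\]
with $R_T(t)=\int_0^t H_T'(\xi_T(s))\bigl[a_T(s,\xi_T(s))-\hat a_T(\xi_T(s))\bigr]\,ds$. Because $\tfrac12|H_T'|$ is bounded on compacts by (\ref{A4}), and $\{\xi_T\}$ is tight (so $\xi_T$ stays in a compact with high probability on $[0,L]$), condition~(\ref{A0}) forces $\sup_{t\le L}|R_T(t)|\to 0$ in probability. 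Rearranging,
\[
\beta_T^{(1)}(t)=H_T\bigl(\xi_T(t)\bigr)-H_T(x_0)-\int_0^t H_T'\bigl(\xi_T(s)\bigr)\,dW_T(s)-R_T(t).
\]

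To pass to the limit, apply the change of variables $y=G_T(u)$ together with (\ref{A4}) to obtain, uniformly on compacts,
\[
H_T\bigl(\xi_T(t)\bigr)-H_T(x_0)=2\int_{x_0}^{\xi_T(t)}\!\!\tfrac12 H_T'(u)\,du+o(1)\approx 2\int_{G_T(x_0)}^{\zeta_T(t)} g_0(y)\,dy,
\]
which by Theorem~\ref{th2} and $G_T(x_0)\to y_0$ converges weakly to $2\int_{y_0}^{\zeta(t)}g_0(y)\,dy$. For the stochastic integral, (\ref{A4}) yields $H_T'(\xi_T(s))\approx 2 g_0(\zeta_T(s))G_T'(\xi_T(s))$, whose predictable quadratic variation equals $4\int_0^t g_0^2(\zeta_T(s))[G_T'(\xi_T(s))]^2\,ds$; Theorem~\ref{th2} (via $q_T^{(2)}$) identifies this in the limit with $4\int_0^t g_0^2(\zeta(s))\sigma_0^2(\zeta(s))\,ds$, and a standard martingale convergence argument (joint tightness of $(\zeta_T,W_T)$, passage to a Skorokhod subsequence, and uniqueness of the weak solution of~(\ref{o33})) identifies the limit stochastic integral as $2\int_0^t g_0(\zeta(s))\sigma_0(\zeta(s))\,d\hat W(s)$. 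Combining these three pieces yields the stated limit.

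The principal obstacle is justifying the weak limit of the stochastic integral $\int_0^t H_T'(\xi_T(s))\,dW_T(s)$: the integrand is itself converging only in a uniform-on-compacts sense and both integrator and integrand depend on $T$, so one must control joint convergence of $(\zeta_T,W_T)$ and apply a martingale characterization of the limit (as is done systematically in the auxiliary results of Section~5, paralleling \cite{paperL}). The drift remainder $R_T$ is secondary but relies essentially on the uniform control provided by (\ref{A0}) together with tightness from Theorem~\ref{th1}.
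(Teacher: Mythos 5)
Your proposal follows essentially the same route as the paper: your $H_T$ is exactly the paper's auxiliary function $\varPhi_T(x)=2\int_0^x f_T'(u)\int_0^u \frac{g_T(v)}{f_T'(v)}\,dv\,du$, you apply the generalized It\^{o} formula to get the same representation of $\beta_T^{(1)}$, kill the inhomogeneity remainder via (\ref{A0}) together with the bound from (\ref{A4}) and tightness (the paper's $\gamma^{(0)}_T$), and then use (\ref{A4}) with the change of variables $y=G_T(u)$ and the martingale/Skorokhod machinery of Theorem~\ref{th2} and Lemma~\ref{lm5} to identify the limit, just as in Remark~\ref{zth4} combined with Theorem~2.3 of \cite{paperL}. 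The argument is correct and matches the paper's proof in structure and detail.
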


\begin{thm}\label{th5}
Let $\xi_T$ be a solution to equation~(\ref{o1}) from the class $K  (G_T
 )$, and let the assumptions of Theorem~\ref{th2} hold. Suppose that the
functions $\hat{a}_T (x)$ satisfy assumption~(\ref{A3}). Assume that, for
measurable and locally bounded functions $g_T(x)$, there exist two constants
$c_0$ and $b_0$ such that for all $N>0$
\begin{eqnarray*}
&\displaystyle \biggl\llvert f_T'(x) \int_{0}^{x} \frac{g_T(v)}{f'_T(v)}\,dv \biggr\rrvert
\chi_{|x|\leq N}\leq C_N,&
\\
&\displaystyle \lim_{T\to\infty}\sup_{|x|\leq N} \biggl\llvert \int_{0}^{x}
\biggl[f'_T(u)\int_{0}^{u}
\frac{g_T(v)}{f'_T(v)} \,dv-c_0 \biggr]\,du \biggr\rrvert = 0,&
\end{eqnarray*}
and the functions
\[
Q_T(x)= \biggl[f'_T(x)\int_{0}^{x} \frac{g_T(v)}{f'_T(v)}\, dv-c_0
\biggr]^2-b_0^2
\]
satisfy assumption~(\ref{A3}). Then the stochastic processes
\[
\beta^{(1)}_T(t)=\int_{0}^{t}
g_T \bigl(\xi_T(s) \bigr)\,ds
\]
weakly converge, as $T\to\infty$, to the process $2b_0W(t)$, where $\{ W(t),
t\geq0\}$ is a Wiener process.
\end{thm}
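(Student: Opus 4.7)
The plan is to apply It\^o's formula to a carefully chosen primitive so that $\int_0^t g_T(\xi_T(s))\,ds$ becomes a boundary term plus a stochastic integral plus a negligible drift correction, and then to identify the limit via the martingale CLT. Set
\[
h_T(x)=f_T'(x)\int_0^x\frac{g_T(v)}{f_T'(v)}\,dv,\qquad H_T(x)=\int_0^x h_T(u)\,du.
\]
Since $f_T''(x)=-2\hat a_T(x)f_T'(x)$, a direct computation gives $h_T'(x)=g_T(x)-2\hat a_T(x)h_T(x)$ a.e., so $H_T''$ is locally integrable and assumption $(A_1)$ together with the first displayed bound in the statement justifies a Krylov-type It\^o formula applied to $H_T(\xi_T(t))$. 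After rearrangement, using~(\ref{o1}),
\[
\int_0^t g_T(\xi_T(s))\,ds = 2\bigl[H_T(\xi_T(t))-H_T(x_0)\bigr] - 2\int_0^t h_T(\xi_T(s))\,dW_T(s) - 2\int_0^t h_T(\xi_T(s))\bigl[a_T(s,\xi_T(s))-\hat a_T(\xi_T(s))\bigr]\,ds.
\]

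The second displayed hypothesis forces $H_T(x)-c_0 x\to 0$ uniformly on compacts, so using the tightness of $\xi_T$ coming from Theorem~\ref{th1} together with property~3) of Definition~\ref{oz1} (which bounds $|\xi_T|$ through $|G_T(\xi_T)|^{1/\alpha}$), the boundary term equals $2c_0[\xi_T(t)-x_0]+o_P(1)$. Writing $\xi_T(t)-x_0=W_T(t)+\int_0^t a_T(s,\xi_T(s))\,ds$ and invoking assumption $(A_0)$ together with the uniform local boundedness of $h_T-c_0$, I can regroup to obtain
\[
\int_0^t g_T(\xi_T(s))\,ds = -2\int_0^t\bigl[h_T(\xi_T(s))-c_0\bigr]\,dW_T(s) + 2c_0\int_0^t\hat a_T(\xi_T(s))\,ds + o_P(1).
\]

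At this point the argument relies on an auxiliary lemma that I expect to be stated in Section~5 of this paper (and whose homogeneous counterpart is contained in \cite{paperL}): if $q_T$ satisfies $(A_3)$, then $\int_0^t q_T(\xi_T(s))\,ds\to 0$ in probability uniformly on compact $t$-intervals. Applied to $q_T=\hat a_T$, which satisfies $(A_3)$ by hypothesis, this kills the drift term. The remaining stochastic integral $M_T(t)=-2\int_0^t[h_T(\xi_T(s))-c_0]\,dW_T(s)$ is a continuous local martingale whose bracket equals
\[
\langle M_T\rangle(t)=4\int_0^t Q_T(\xi_T(s))\,ds+4b_0^2 t,
\]
and applying the same auxiliary lemma to $Q_T$ yields $\langle M_T\rangle(t)\to 4b_0^2 t$ in probability. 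The functional martingale CLT then delivers $M_T\Rightarrow 2b_0 W$ for a standard Wiener process $W$, and the theorem follows.

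The main obstacle is the localization step needed to promote the ``uniform on $\{|x|\leq N\}$'' hypotheses (and the $(A_3)$-lemma) to probabilistic statements along the random trajectory $\xi_T(s)$, $s\in[0,t]$; this hinges on a uniform-in-$T$ bound for $\pr\{\sup_{s\leq L}|\xi_T(s)|>N\}$, which is not immediate from Theorem~\ref{th1} alone because that theorem concerns $G_T(\xi_T)$ rather than $\xi_T$ itself, and is exactly the place where the coercivity $|G_T(x)|\geq C|x|^\alpha$ of Definition~\ref{oz1} becomes essential. A secondary delicate point is ensuring that the limit $W$ is a genuine Brownian motion rather than a time-changed one: for this, $\langle M_T\rangle(t)\to 4b_0^2 t$ must hold in probability, which is precisely the output of the $(A_3)$-type control on $Q_T$.
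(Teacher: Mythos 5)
Your proposal is correct and follows essentially the same route as the paper: your $H_T$ is (up to the factor $2$) the paper's $\varPhi_T$, the decomposition into boundary term, stochastic integral, $(A_0)$-negligible inhomogeneity terms and the drift $2c_0\int_0^t\hat a_T(\xi_T(s))\,ds$ is exactly the paper's representation of $\beta_T^{(1)}$, and the paper likewise kills the drift and controls the bracket via its Lemma~5.2 (the $(A_3)$ lemma) applied to $\hat a_T$ and to $q_T=4Q_T$, with the localization through $P_{NT}$ and condition 3) of Definition~2.2 just as you describe. The only cosmetic difference is the final identification: the paper uses a random time change $\eta_T^{(1)}=W_T^*(\langle\eta_T^{(1)}\rangle)$ rather than invoking the functional martingale CLT, which amounts to the same conclusion.
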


\begin{thm}\label{th6}
Let $\xi_T$ be a solution to equation~(\ref{o1}) from the class $K  (G_T
 )$ and let assumptions of Theorem~\ref{th2} hold. Assume that, for
measurable and locally bounded functions $g_T(x)$, there exists a measurable
locally bounded function $g_0(x)$ such that the function
\[
q_T(x)= \bigl[g_T(x)- g_0
\bigl(G_T(x) \bigr)G_T'(x)
\bigr]^2
\]
satisfies assumption~(\ref{A3}). Then the stochastic processes
\[
\beta_T^{(2)} (t)=\int_{0}^{t}
g_T \bigl(\xi_T(s) \bigr)\,dW_T(s),
\]
where $\xi_T$ and $W_T$ are related via equation~(\ref{o1}), weakly converge,
as $T\to\infty$, to the process
\[
{\beta}^{(2)} (t)=\int_{0}^{t} g_0 \bigl(
\zeta(s) \bigr)\sigma_0 \bigl(\zeta (s) \bigr)\,d\hat{W}(s),
\]
where $ (\zeta, \hat{W} )$ is a solution to equation~(\ref{o33}).
\end{thm}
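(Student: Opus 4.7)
The plan is to decompose
\begin{equation*}
\beta_T^{(2)}(t) = \int_0^t g_0\bigl(G_T(\xi_T(s))\bigr)\,G_T'\bigl(\xi_T(s)\bigr)\, dW_T(s) + R_T(t),
\end{equation*}
where
\begin{equation*}
R_T(t) = \int_0^t \bigl[g_T(\xi_T(s)) - g_0(G_T(\xi_T(s)))\,G_T'(\xi_T(s))\bigr]\, dW_T(s).
\end{equation*}
By the It\^o isometry, $\M|R_T(t)|^2 = \M\int_0^t q_T(\xi_T(s))\,ds$, and I would first show that this expectation vanishes as $T\to\infty$. Since $q_T$ satisfies assumption~(\ref{A3}), this is exactly the setting for a Krylov-type occupation-time estimate expressible through the scale function $f_T$; condition~(\ref{A0}) is what allows such an estimate, originally formulated for the homogeneous diffusion $\hat{\xi}_T$, to be transferred to the inhomogeneous solution $\xi_T$. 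I expect the relevant bound to be supplied by one of the auxiliary lemmas collected in Section~5.

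For the main term, set $M_T(t)=\int_0^t G_T'(\xi_T(s))\,dW_T(s)$. It\^o's formula applied to $G_T(\xi_T(t))$ shows that $M_T$ is precisely the martingale part of $\zeta_T=G_T(\xi_T)$, with quadratic variation $\langle M_T\rangle_t = \int_0^t [G_T'(\xi_T(s))]^2\,ds$. Under the hypotheses of Theorem~\ref{th2} one has $\zeta_T\Rightarrow\zeta$; moreover, since $q_T^{(2)}$ satisfies~(\ref{A3}), $\langle M_T\rangle_t$ converges in probability to $\int_0^t \sigma_0^2(\zeta(s))\,ds$. Joint tightness of $(\zeta_T,M_T)$, together with the identification of the limiting bracket, then yields $(\zeta_T,M_T)\Rightarrow(\zeta,M)$ with $M(t)=\int_0^t\sigma_0(\zeta(s))\,d\hat{W}(s)$, the same Wiener process driving equation~(\ref{o33}).

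It remains to pass to the limit in $\int_0^t g_0(\zeta_T(s))\,dM_T(s)$ and identify it as $\int_0^t g_0(\zeta(s))\sigma_0(\zeta(s))\,d\hat{W}(s)$. For continuous bounded $g_0$ this is standard: Skorohod coupling plus a weak-convergence-of-stochastic-integrals result (using that $\langle M_T\rangle$ converges in probability) gives the desired limit. The case of merely measurable, locally bounded $g_0$ is reduced to the continuous case by approximation in the occupation measure of $\zeta$, the residual being controlled once more by the occupation-time estimates flowing from~(\ref{A3}). This last reduction, together with the transfer of the scale-function estimates from $\hat{\xi}_T$ to $\xi_T$ via~(\ref{A0}), is the main technical obstacle; it is handled in essentially the same spirit as in \cite{paperL}, where the homogeneous analogue was treated, the novelty here being the role of the inhomogeneous approximation condition~(\ref{A0}).
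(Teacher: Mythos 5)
Your overall architecture is essentially the paper's: the paper disposes of Theorem~\ref{th6} by stating that its proof is literally that of Theorem~2.4 in \cite{paperL}, and that argument is exactly your decomposition --- the main term $\int_0^t g_0(\zeta_T(s))\,d\eta_T(s)$, where $\eta_T(t)=\int_0^t G_T'(\xi_T(s))\,dW_T(s)$ is the martingale part of $\zeta_T=G_T(\xi_T)$ already produced in the proof of Theorem~\ref{th2}, plus a remainder $R_T$ whose quadratic characteristic is $\int_0^t q_T(\xi_T(s))\,ds$, annihilated through assumption~(\ref{A3}), with the limit identified via the Skorokhod coupling of $(\zeta_T,\eta_T)$ and the lemmas of Section~5 (Lemmas~\ref{lm2}, \ref{lm5}, \ref{lm3}, \ref{lm1}), condition~(\ref{A0}) being what transfers the scale-function estimates to the inhomogeneous equation. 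So the route is the intended one.

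One step as you wrote it would not go through: the appeal to the It\^o isometry, $\M|R_T(t)|^2=\M\int_0^t q_T(\xi_T(s))\,ds$, with the plan to show that this \emph{expectation} vanishes. The functions $g_T$, $g_0$, $G_T'$ are only locally bounded with no growth control, so $q_T(\xi_T(s))$ need not be integrable and $R_T$ is a priori only a local martingale; moreover, assumption~(\ref{A3}) through Lemma~\ref{lm2} yields only $\sup_{0\le t\le L}\big|\int_0^t q_T(\xi_T(s))\,ds\big|\stackrel{\pr}{\to}0$, i.e.\ convergence in probability, which does not imply convergence of expectations without uniform integrability. The correct (and the paper's) mechanism is localization: on the event $\sup_{0\le t\le L}|\xi_T(t)|\le N$, whose complement is controlled by condition~3) of the class $K(G_T)$ together with the moment bound~(\ref{o10}), one combines Lemma~\ref{lm2} with a Lenglart/Doob-type estimate for the local martingale $R_T$ in terms of its bracket, obtaining $\sup_{0\le t\le L}|R_T(t)|\stackrel{\pr}{\to}0$ --- which is also the uniform-in-$t$ statement you actually need for weak convergence on $[0,L]$, rather than pointwise $L^2$ smallness. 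A minor second point: the occupation-time estimate you invoke to reduce merely measurable $g_0$ to the continuous case is Lemma~\ref{lm1}, which flows from condition~(\ref{A2}) (condition~4) of the class $K(G_T)$), not from~(\ref{A3}); with that attribution corrected, your reduction matches the scheme of \cite{paperL}.
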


\begin{thm}\label{th7}
Let $\xi_T$ and $W_T$ be related via equation~(\ref{o1}) from the class
$K (G_T  )$ and let the assumptions of Theorem~\ref{th2} hold.
Assume that, for continuous functions $F_T(x)$ and locally bounded measurable
functions $g_T(x)$, there exist a continuous function $F_0(x)$ and locally
bounded measurable function $g_0(x)$ such that, for all $N>0$
\[
\lim_{T\to\infty}\sup_{|x|\leq N} \bigl\llvert F_T(x)-F_0
\bigl(G_T(x) \bigr) \bigr\rrvert =0,
\]
and let the functions $g_T(x)$ and $g_0(x)$ satisfy the assumptions of
Theorem~\ref{th6}. Then the stochastic processes
\[
I_T(t)=F_T \bigl(\xi_T(t) \bigr)+\int_{0}^{t}
g_T \bigl(\xi_T(s) \bigr)\,dW_T(s)
\]
weakly converge, as $T\to\infty$, to the process
\[
I_0(t)=F_0 \bigl(\zeta(t) \bigr)+\int_{0}^{t}
g_0 \bigl(\zeta(s) \bigr)\sigma_0 \bigl(\zeta (s)
\bigr) \,d\hat{W}(s),
\]
where $ (\zeta, \hat{W} )$ is a solution to equation~(\ref{o33}).
\end{thm}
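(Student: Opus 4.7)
The plan is to decompose
\[
I_T(t) = \bigl[F_T(\xi_T(t)) - F_0(\zeta_T(t))\bigr] + F_0(\zeta_T(t)) + \beta_T^{(2)}(t),
\]
with $\zeta_T(t) = G_T(\xi_T(t))$, and to handle the three pieces separately: the bracketed term is an asymptotically negligible remainder, while the remaining two converge jointly to $F_0(\zeta(t)) + \beta^{(2)}(t) = I_0(t)$.

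First I would show that the remainder is negligible uniformly on $[0,L]$. By Theorem~\ref{th1} the family $\zeta_T$ is weakly compact on $C[0,L]$; combined with the lower bound $|G_T(x)| \ge C|x|^\alpha$ from condition~(\ref{A1}), this forces $\sup_{s\le L}|\xi_T(s)|$ to be bounded in probability uniformly in $T$. Hence for every $\varepsilon>0$ there is $N$ with $\pr\{\sup_{s\le L}|\xi_T(s)| > N\} < \varepsilon$ for all $T$, and the hypothesis $\sup_{|x|\le N}|F_T(x) - F_0(G_T(x))| \to 0$ then yields
\[
\sup_{t\le L}\bigl|F_T(\xi_T(t)) - F_0(\zeta_T(t))\bigr| \to 0 \quad \text{in probability.}
\]

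Next I would establish the joint weak convergence $(\zeta_T,\beta_T^{(2)}) \Rightarrow (\zeta,\beta^{(2)})$ in $C([0,L],\R^2)$. Marginally these are supplied by Theorems~\ref{th2} and~\ref{th6}; the decisive point is that both $\zeta_T$ and $\beta_T^{(2)}$ are driven by the \emph{same} $W_T$, while $\zeta$ and $\beta^{(2)}$ are built from the \emph{same} $\hat W$. Joint tightness is immediate from the marginal tightness. I would identify the joint limit by the martingale-problem argument already used in Theorem~\ref{th2}: on any subsequence along which $(\zeta_T,\beta_T^{(2)})$ converges to a continuous pair $(\tilde\zeta,\tilde\beta)$, the It\^{o} formula together with conditions~(\ref{A0}) and~(\ref{A3}) on $q_T^{(1)}$, $q_T^{(2)}$ and $[g_T - g_0(G_T)G_T']^2$ shows that $\tilde\zeta$ solves~(\ref{o33}) with some Wiener process $\hat W$, while the cross-variation computation
\[
\bigl\langle\zeta_T,\beta_T^{(2)}\bigr\rangle_t \approx \int_0^t g_0\bigl(\zeta_T(s)\bigr)\sigma_0^2\bigl(\zeta_T(s)\bigr)\,ds
\]
forces $\tilde\beta(t) = \int_0^t g_0(\tilde\zeta(s))\sigma_0(\tilde\zeta(s))\,d\hat W(s)$ with the \emph{same} $\hat W$. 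Uniqueness in law for~(\ref{o33}) then pins down the joint distribution, so convergence holds along the full sequence, not merely along subsequences.

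Since $F_0$ is continuous, the functional $\Phi\colon C([0,L],\R^2)\to C([0,L],\R)$ defined by $\Phi(u,v)(t) = F_0(u(t)) + v(t)$ is continuous, and the continuous mapping theorem applied to the joint limit yields $F_0(\zeta_T(\cdot)) + \beta_T^{(2)}(\cdot) \Rightarrow I_0(\cdot)$; combined with the first step through Slutsky's lemma this gives $I_T \Rightarrow I_0$. The principal obstacle is the joint-convergence step: one must upgrade the marginal convergences of Theorems~\ref{th2} and~\ref{th6} so that the Wiener process appearing in the limit of $\beta_T^{(2)}$ is precisely the $\hat W$ driving $\zeta$. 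Without this coupling the expression $I_0 = F_0(\zeta) + \beta^{(2)}$ would not be a well-defined functional of a single pair $(\zeta,\hat W)$, and the decomposition above would lose its meaning.
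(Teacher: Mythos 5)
Your overall architecture is sound and close in spirit to what the paper does: split off $F_T(\xi_T(t))-F_0(G_T(\xi_T(t)))$ and kill it with the $P_{NT}$-argument (condition 3) of Definition~\ref{oz1} plus the moment bound~(\ref{o10}) — note this bound comes from membership in $K(G_T)$, not from~(\ref{A1}) itself), then prove joint convergence of $(\zeta_T,\beta_T^{(2)})$ and apply continuous mapping. Your insistence that the limit of $\beta_T^{(2)}$ must be driven by the \emph{same} $\hat W$ as $\zeta$ is exactly the right concern.

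The genuine gap is in how you identify the joint limit. Your cross-variation (and, implicitly, quadratic-variation) computation needs
\[
\int_0^t g_0\bigl(\zeta_T(s)\bigr)\bigl(\bigl[G_T'\bigl(\xi_T(s)\bigr)\bigr]^2-\sigma_0^2\bigl(\zeta_T(s)\bigr)\bigr)\,ds\;\stackrel{\pr}{\to}\;0
\]
(and the analogous statement with $g_0^2$), i.e.\ condition~(\ref{A3}) for $g_0(G_T)\,q_T^{(2)}$. But the hypotheses only give~(\ref{A3}) for $q_T^{(2)}$ and for $[g_T-g_0(G_T)G_T']^2$, and Lemma~\ref{lm2} converts~(\ref{A3}) into $\sup_t|\int_0^t q_T^{(2)}(\xi_T(s))\,ds|\to 0$ — it does not tolerate an extra merely measurable, locally bounded factor $g_0(\zeta_T(s))$ inside the time integral, because $\int_0^t|q_T^{(2)}(\xi_T(s))|\,ds$ is not controlled and $g_0$ cannot be pulled through the weighted integral in~(\ref{A3}). (The cross term $G_T'[g_T-g_0(G_T)G_T']$ you can handle by Cauchy--Schwarz, but the $g_0(G_T)q_T^{(2)}$ term cannot be disposed of this way.) The paper avoids this entirely: its proof (deferred to Theorems~2.4--2.5 of \cite{paperL}) writes $\beta_T^{(2)}(t)=\int_0^t g_0(\zeta_T(s))\,d\eta_T(s)+\rho_T(t)$, where $\eta_T$ is the martingale part of $\zeta_T$ from~(\ref{o4}) and $\langle\rho_T\rangle(t)=\int_0^t[g_T-g_0(G_T)G_T']^2(\xi_T(s))\,ds\to 0$ by Lemma~\ref{lm2}, so $\rho_T\to 0$ uniformly in probability; then it passes to the limit in $\int_0^t g_0(\tilde\zeta_{T_n}(s))\,d\tilde\eta_{T_n}(s)$ along the Skorokhod representation already built in the proof of Theorem~\ref{th2}, using Lemma~\ref{lm5}. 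This yields $\int_0^t g_0(\tilde\zeta(s))\,d\tilde\eta(s)=\int_0^t g_0(\tilde\zeta(s))\sigma_0(\tilde\zeta(s))\,d\hat W(s)$ with the same $\hat W$ automatically, with no covariation identification needed. Replace your cross-variation step by this decomposition and the rest of your argument goes through.
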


\section{Proof of the main results}

In the proofs of theorems, which are performed similarly to the proofs of the
corresponding theorems in \cite{paperL}, we emphasize the differences
associated with inhomogeneous equations. The proof of the Theorem~\ref{th2}
is given for a better understanding of the brief proofs of the other
theorems.

\begin{proof}[Proof of Theorem~\ref{th1}]
The functions $G_T(x)$ have continuous derivatives $G_T'(x)$ for all
$T>0$, their second derivatives $G_T''(x)$ exist a.e. with respect to
the Lebesgue measure and are locally integrable. Therefore (see
\cite{book12}, Chap. II, \S10), we can apply the It\^{o} formula to the
process $\zeta_T(t)=G_T(\xi_T(t))$, and with probability one, for all
$t\geq0$, we obtain
\begingroup
\abovedisplayskip=5.5pt
\belowdisplayskip=5.5pt
\begin{equation}
\label{o4} \zeta_T (t)=G_T(x_0)+\int_{0}^{t}L_T
\bigl(\xi_T(s) \bigr)\,ds+\int_{0}^{t}G_T'
\bigl(\xi_T(s) \bigr)\,dW_T(s),
\end{equation}
where
\[
L_T(x)= G_T'(x)a_T(t,x)+
\frac{1}{2}G_T''(x).
\]

Let
%
\[
\chi_{N}(t)=
\begin{cases}
1,\quad \sup_{0\leq s\leq t}  \llvert \zeta_T(s) \rrvert \leq N, \\
0,\quad \sup_{0\leq s\leq t}  \llvert \zeta_T(s) \rrvert >N.
\end{cases}
\]

It is clear that for $s\leq t$ we have $\chi_{N}(t)\chi_{N}(s)=\chi _{N}(t)$
with probability one. Thus, according to~(\ref{o4}), the following equality
holds with probability one:
%
\begin{align}
\zeta_T (t)\chi_{N}(t)&=\zeta_T (0)
\chi_{N}(t) +\chi_{N}(t)\int_{0}^{t}L_T
\bigl(\xi_T(s) \bigr)\chi_{N}(s)\,ds\notag
\\
&\quad +\chi_{N}(t)\int_{0}^{t}G_T'
\bigl(\xi_T(s) \bigr)\chi_{N}(s)\,dW_T(s).\label{o6}
\end{align}

Hence, using condition~(\ref{A1}) and the properties of stochastic integrals, we
obtain that
\begin{align}
\M\zeta^2_T (t)\chi_{N}(t)&\leq 3 \biggl[\M
\zeta^2_T (0)\chi_{N}(t)+\M \biggl(\int_{0}^{t}L_T \bigl(\xi_T(s) \bigr)
\chi_{N}(s)\,ds \biggr)^2\notag
\\
&\quad +\M \biggl( \int_{0}^{t}G_T'
\bigl(\xi_T(s) \bigr)\chi_{N}(s)\,dW_T(s)
\biggr)^2 \biggr]\notag
\\
&\leq 3 \biggl[\M\zeta^2_T (0)\chi_{N}(t)+t
\int_{0}^{t}\M L_T^2 \bigl(
\xi_T(s) \bigr)\chi_{N}(s)\,ds\notag
\\
&\quad + \int_{0}^{t}\M \bigl[G_T'
\bigl(\xi_T(s) \bigr) \bigr]^2\chi_{N}(s)\,ds
\biggr]\notag
\\
&\leq 3 \biggl[C+ t\int_{0}^{t}C \bigl[1+\M\zeta^2_T
(s)\chi_{N}(s) \bigr]\,ds\notag
\\
&\quad +C\int_{0}^{t} \bigl[1+\M\zeta^2_T
(s)\chi_{N}(s) \bigr]\,ds \biggr]\notag
\\
&\leq C_L^{(1)}+C_L^{(2)}\int
_{0}^{t}\M \zeta^2_T (s)
\chi_{N}(s)\,ds,\label{o7}
\end{align}
\endgroup
where $C_L^{(1)}=3C(1+t+t^2)$, $C_L^{(2)}=3C(1+t)$, $C>0$ is a constant from
condition~(\ref{A1}), $0\leq t\leq L$.

Using the Gronwall--Bellman inequality, we conclude that there exists a
constant $K_L$, which is independent of $T$, and for $0\leq t\leq L$
\[
\M\zeta^2_T (t)\chi_{N}(t)\leq
K_L.
\]
Let $N\uparrow\infty$, then $\zeta^2_T (t)\chi_{N}(t)\uparrow\zeta ^2_T (t)$,
and we get the inequality
%
\begin{equation}
\label{o8} \sup_{0\leq t\leq L}\M\zeta^2_T (t)\leq
K_L.
\end{equation}

Similarly to~(\ref{o7}), using~(\ref{o4}) and the inequality
\[
\M\sup_{0\leq t\leq L} \biggl[ \int_{0}^{t}G_T'
\bigl(\xi_T(s) \bigr)\,dW_T(s) \biggr]^2
\leq4 \int_{0}^{L}\M \bigl[G_T' \bigl(
\xi_T(s) \bigr) \bigr]^2\,ds,
\]
we conclude that
\begin{align*}
\M\sup_{0\leq t\leq L} \bigl\llvert \zeta_T (t) \bigr
\rrvert^2&\leq3 \Biggl[G_T^2(x_0)
+L\int_{0}^{L}C \bigl[1+\M\zeta^2_T(s)
\bigr] \,ds\,{+}\!\int_{0}^{L} C \bigl[1+\M
\zeta^2_T(s) \bigr] \, ds \! \Biggr]
\\
&\leq\tilde{C}_L^{(1)}+\tilde{C}_L^{(2)}
\int_{0}^{L}\M \bigl[\zeta _T (s)
\bigr]^2\,ds.
\end{align*}

Therefore, considering~(\ref{o8}), we obtain the inequality
%
\begin{equation}
\label{o9} \M\sup_{0\leq t\leq L} \bigl\llvert \zeta_T (t)
\bigr\rrvert ^2\leq\tilde{K}_L
\end{equation}
for all $L>0$, where the constants $\tilde{K}_L$ are independent of $T$.

Using the inequalities for martingales
and for stochastic integrals (see \cite{book2}, Part~I, \S3, Theorem~6), we
obtain that
\begin{align*}
&\M\sup_{0\leq t\leq L} \Biggl\llvert \int_{0}^{t}G_T'
\bigl(\xi_T(s) \bigr)\chi_{N}(s)\,dW_T(s)
\Biggr\rrvert ^{2m}
\\
&\quad \leq \biggl(\frac{2m}{2m-1} \biggr)^{2m}\M \Biggl\llvert \int
_{0}^{L}G_T' \bigl(
\xi_T(s) \bigr)\chi_{N}(s)\,dW_T(s) \Biggr
\rrvert ^{2m}
\\
&\quad \leq \biggl(\frac{2m}{2m-1} \biggr)^{2m} \bigl[m(2m-1)
\bigr]^{m-1}L^{m-1}\int_{0}^{L}\M
\bigl[G_T' \bigl(\xi_T(s) \bigr)
\bigr]^{2m}\chi _{N}(s)\,ds,
\end{align*}
for any natural number $m$. Therefore, similarly to~(\ref{o9}) we have
inequality
%
\begin{equation}
\label{o99} \M\sup_{0\leq t\leq L} \bigl\llvert \zeta_T
(t) \bigr\rrvert ^{2m}\leq{K}_{Lm}.
\end{equation}

Furthermore, for all $\alpha>0$ there exists $m\in\N$ such that $\alpha
\leq2m$ and, for random variable $\eta$, we have
\[
\M|\eta|^{\alpha}\leq1+\M|\eta|^{2m}.
\]

The last inequality together with~(\ref{o99}) implies that
%
\begin{equation}
\label{o10} \M\sup_{0\leq t\leq L} \bigl\llvert \zeta_T (t)
\bigr\rrvert ^{\alpha}\leq{K}_{L\alpha}
\end{equation}
for all $\alpha>0$ and $L>0$, where the constants ${K}_{L\alpha}$ are
independent of $T$.

Since for $t_1<t_2\leq L$
\begin{align*}
&\M \bigl[\zeta_T(t_2)-\zeta_T(t_1)
\bigr]^4
\\
&\quad \leq8 \Biggl[\M \Biggl(\;\int_{t_1}^{t_2}L_T
\bigl(\xi_T(s) \bigr)\,ds \Biggr)^4+ \M \Biggl(\;\int
_{t_1}^{t_2}G_T' \bigl(
\xi_T(s) \bigr)\,dW_T(s) \Biggr)^4 \Biggr]
\\
&\quad \leq8 \Biggl[(t_2-t_1)^3\;\int
_{t_1}^{t_2}\M \bigl\llvert L_T \bigl(\xi
_T(s) \bigr) \bigr\rrvert^4\,ds+ 36(t_2-t_1)
\;\int_{t_1}^{t_2}\M \bigl[G_T'
\bigl(\xi_T(s) \bigr) \bigr]^4\,ds \Biggr],
\end{align*}
%
considering condition~(\ref{A1}) and inequality~(\ref{o10}), we get
%
\begin{equation}
\label{o11} \M \bigl[\zeta_T(t_2)-
\zeta_T(t_1) \bigr]^4\leq
C_L|t_2-t_1|^2,
\end{equation}
where the constants $C_L$ are independent of $T$.

According to~(\ref{o8}) and~(\ref{o11}), we have convergences
%
\begin{align}
\lim_{N\to\infty}\overline{\lim_{T \to\infty}} \sup_{0\leq t\leq L} \pr \bigl\{\bigl\llvert \zeta_T(t)\bigr\rrvert
>N \bigr\}&=0, \notag
\\
\lim_{h\to0}\overline{\lim_{T \to\infty}}\sup_{\substack{\llvert t_{1} -t_{2} \rrvert \le h \\ t_{i}\le L} } \pr \bigl\{\bigl\llvert \zeta_T(t_2)-
\zeta_T(t_1)\bigr\rrvert >\varepsilon \bigr
\}&=0\label{o12}
\end{align}
for any $L>0$, $\varepsilon>0$.

It means that we can apply Skorokhod's convergent subsequence principle (see
\cite{book6}, Chapter I, \S6) for the processes $\zeta_T(t)$ for all $0\leq
t\leq L$. According to this principle, given an arbitrary sequence
$T'_n\to\infty $, we can choose a subsequence \mbox{$T_n\to\infty$}, a probability
space $(\tilde\varOmega,\tilde\Im, \tilde \pr)$, and stochastic processes
$\tilde{\zeta}_{T_n}(t)$, ${\zeta}(t)$ defined on this space such that their
finite-dimensional distributions coincide with those of the processes
$\zeta_{T_n}(t)$, and, moreover,
$\tilde{\zeta}_{T_n}(t)\stackrel{\tilde{\pr}}{\to}{\zeta }(t)$, as
$T_n\to\infty$, for all $0\leq t\leq L$. The processes
$\tilde{\zeta}_{T_n}(t)$ and ${\zeta}(t)$ can be considered separable.

Using~(\ref{o11}), we have
\[
\M \bigl[\tilde{\zeta}_{T_n}(t_2)-\tilde{
\zeta}_{T_n}(t_1) \bigr]^4\leq C_L
\llvert t_2-t_1\rrvert ^2
\]
for all $0\leq t_1 \leq t_2\leq L$.

By Fatou's lemma,
\[
\M \bigl[\zeta(t_2)-\zeta(t_1) \bigr]^4\leq
C_L\llvert t_2-t_1\rrvert ^2.
\]
Thus, the processes $\tilde{\zeta}_{T_n}(t)$ and ${\zeta}(t)$ are continuous
with probability one. We have that finite-dimensional distributions of the
processes $\tilde {\zeta}_{T_n}(t)$ converge, as $T_n\to\infty$, to the
correspondent finite-dimensional distributions of the process ${\zeta}(t)$.
For a weak convergence of the processes ${\zeta}_{T_n}(t)$ it is sufficient
(see \cite{book11}, Chapter IX, \S2) to prove
%
\begin{equation}
\label{o13} \lim_{h\to0}\overline{\lim_{T_n
\to\infty}}\pr \Bigl\{ \sup_{\substack{\llvert t_{1} -t_{2}
\rrvert \le h \\ t_{i} \le L} } \bigl\llvert \zeta_{T_n}(t_2)-
\zeta_{T_n}(t_1) \bigr\rrvert >\varepsilon \Bigr\}=0
\end{equation}
for any $L>0$, $\varepsilon>0$.

Due to inequalities (see \cite{book11}, Chapter IX, \S3)
\begin{align*}
&\pr \bigl\{\sup_{\substack{\llvert t_{1} -t_{2} \rrvert \le h \\t_{i} \le L} } \bigl\llvert \zeta_{T_n}(t_2)-
\zeta_{T_n}(t_1) \bigr\rrvert >\varepsilon \bigr\}
\\
&\quad \leq\sum_{kh\leq L}\pr \biggl\{ \sup_{kh\leq t\leq(k+1)h } \bigl\llvert \zeta_{T_n}(t)-\zeta_{T_n}(kh)
\bigr\rrvert > \frac{\varepsilon}{4} \biggr\}
\\
&\quad \leq \biggl(\frac{4}{\varepsilon} \biggr)^48\sum
_{kh\leq L} \Biggl\{\M\sup_{kh\leq t\leq(k+1)h } \Biggl(\;\int
_{kh}^{t}L_{T_n} \bigl(\xi_T(s)
\bigr)\,ds \Biggr)^4
\\
&\qquad +\M\sup_{kh\leq t\leq(k+1)h } \Biggl(\;\int_{kh}^{t}G_{T_n}'
\bigl(\xi_T(s) \bigr)\,dW_T(s) \Biggr)^4
\Biggr\}\leq \biggl(\frac{4}{\varepsilon} \biggr)^4K_L\sum
_{kh\leq L} \bigl(h^4+h^2 \bigr),
\end{align*}
where $K_L$ are independent of $T_n$, we obtain~(\ref{o13}). The proof of
Theorem~\ref{th1} is complete.
\end{proof}

\begin{proof}[Proof of Theorem~\ref{th2}]
Rewrite equality~(\ref{o4}) as
%
\begin{equation}
\label{o14} \zeta_T (t)=G_T(x_0)+\int_{0}^{t}a_0
\bigl(\zeta_T(s) \bigr)\,ds+\eta _T(t)+
\alpha^{(0)}_T(t)+\alpha^{(1)}_T(t),
\end{equation}
where
\begin{align*}
\eta_T(t)&=\int_{0}^{t}G_T'
\bigl(\xi_T(s) \bigr)\,dW_T(s),
\\
\alpha^{(0)}_T(t)&=\int_{0}^{t}G_T'
\bigl(\xi_T(s) \bigr)\Delta a_T(s)\, ds,\quad\Delta
a_T(s)=a_T \bigl(s,\xi_T(s) \bigr)-
\hat{a}_T \bigl(\xi_T(s) \bigr),
\\
\alpha^{(1)}_T(t)&=\int_{0}^{t}q^{(1)}_T
\bigl(\xi_T(s) \bigr)\,ds,\quad q^{(1)}_T(x)=G_T'(x)
\hat{a}_T(x)+\frac{1}{2}G_T''(x)-a_0
\bigl(G_T(x) \bigr).
\end{align*}

The conditions~(\ref{A0}) and~(\ref{A1}), together with inequality~(\ref {o10}),
imply that
\begin{align}
\sup_{0\leq t\leq L} \bigl\llvert \alpha^{(0)}_T(t)
\bigr\rrvert &\leq\int_{0}^{L} \bigl\llvert
G_T' \bigl(\xi_T(s) \bigr) \bigr\rrvert
 \bigl\llvert \Delta a_T(s) \bigr\rrvert \,ds\notag
\\
&\leq \Bigl[C \Bigl(1+\sup_{0\leq s\leq L} \bigl\llvert\zeta_T(s)
\bigr\rrvert ^2 \Bigr) \Bigr]^{\frac{1}{2}}\int_{0}^{L}
\sup_{x} \bigl\llvert a_T(s,x)-\hat
{a}_T(x) \bigr\rrvert \,ds\stackrel{\pr} {\to} 0,\label{o15}
\end{align}
as $T\to\infty$ for any $L>0$.

The functions $q^{(1)}_T(x)$ satisfy conditions of Lemma~\ref{lm2}. Thus, for
any $L>0$
%
\begin{equation}
\label{o16} \sup_{0\leq t\leq L} \bigl\llvert \alpha^{(1)}_T(t)
\bigr\rrvert \stackrel{\pr} {\to} 0,
\end{equation}
as $T\to\infty$.

It is clear that $\eta_T (t)$ is a family of continuous martingales with
quadratic characteristics
%
\begin{equation}
\label{o17} \langle\eta_T\rangle(t)=\int_{0}^{t}
\bigl[G_T' \bigl(\xi_T(s) \bigr)
\bigr]^2\,ds=\int_{0}^{t}\sigma_0^2
\bigl(\zeta_T(s) \bigr)\,ds+\alpha^{(2)}_T(t),
\end{equation}
where
\[
\alpha^{(2)}_T(t)=\int_{0}^{t}q^{(2)}_T
\bigl(\xi_T(s) \bigr)\,ds,\quad q^{(2)}_T(x)=
\bigl( G_T'(x) \bigr)^2-
\sigma_0^2 \bigl(G_T(x) \bigr).
\]

The functions $q^{(2)}_T(x)$ satisfy conditions of Lemma~\ref{lm2}. Thus, for
any $L>0$
%
\begin{equation}
\label{o18} \sup_{0\leq t\leq L} \bigl\llvert \alpha^{(2)}_T(t)
\bigr\rrvert \stackrel{\pr} {\to} 0,
\end{equation}
as $T\to\infty$.

According to Theorem~\ref{th1}, the family of the processes $\zeta_T(t)$ is
weakly compact. It is easy to see that compactness conditions~(\ref{o13}) are
fulfilled for the processes $\eta_T(t)$. Using convergences~(\ref{o15}),
(\ref{o16}),~(\ref{o18}), we have that relations~(\ref{o13}) hold for the
processes $\alpha^{(k)}_T(t)$, $k=0,1,2$ as well. It means that we can apply
Skorokhod's convergent subsequence principle (see \cite{book6}, Chapter I,
\S6) for the processes
\[
\bigl(\zeta_T(t),\ \eta_T(t),\ \alpha^{(k)}_T(t),\quad k=0,1,2 \bigr).
\]

According to this principle, given an arbitrary sequence $T'_n\to\infty $, we
can choose a subsequence $T_n\to\infty$, a probability space
$(\tilde\varOmega,\tilde\Im, \tilde \pr)$, and stochastic processes
\[
\bigl(\tilde{\zeta}_{T_n}(t),\ \tilde{\eta}_{T_n}(t),\ \tilde{\alpha }^{(k)}_{T_n}(t),\quad k=0,1,2 \bigr)
\]
defined on this space such that their finite-dimensional distributions
coincide with those of the processes
\[
\bigl(\zeta_{T_n}(t),\ \eta_{T_n}(t),\ \alpha^{(k)}_{T_n}(t),\quad k=0,1,2 \bigr),
\]
and, moreover,
\[
\tilde{\zeta}_{T_n}(t)\stackrel{\tilde{\pr}} {\to}\tilde{\zeta}(t),
\qquad \tilde{\eta}_{T_n}(t)\stackrel{\tilde{\pr}} {\to}\tilde{\eta}(t),
\qquad \tilde{\alpha}^{(k)}_{T_n}(t)\stackrel{\tilde{\pr}} {\to}
\tilde{\alpha }^{(k)}(t),\quad k=0,1,2,
\]
as $T_n\to\infty$, for all $0\leq t\leq L$, where $\tilde{\zeta}(t)$,
$\tilde{\eta}(t)$, $\tilde{\alpha}^{(k)}(t),k=0,1,2$ are some stochastic
processes.

Evidently, relations~(\ref{o15})--(\ref{o18}) imply that $\tilde{\alpha
}^{(k)}(t)\equiv0$, $k=0,1,2$ a.s. According to~(\ref{o11}), the processes
$\tilde{\zeta}(t)$ and $\tilde{\eta}(t)$ are continuous with probability one.
Moreover, applying Lemma~\ref{lm5} together with equalities~(\ref{o14}) and
(\ref{o17}), we obtain that
%
\begin{align}
\label{o19} \tilde{\zeta}_{T_n} (t)&=G_{T_n}(x_0)+
\int_{0}^{t}a_0 \bigl(\tilde{\zeta
}_{T_n}(s) \bigr)\,ds+\tilde{\alpha}^{(0)}_{T_n}(t)+
\tilde{\alpha }^{(1)}_{T_n}(t)+\tilde{\eta}_{T_n}(t),
\\
\langle\tilde{\eta}_{T_n}\rangle(t)&=\int_{0}^{t}\sigma
_0^2 \bigl(\tilde{\zeta}_{T_n}(s) \bigr)\,ds+
\tilde{\alpha}^{(2)}_{T_n}(t),\notag
\end{align}
where
\[
\begin{array}{c}\tilde{\zeta}_{T_n}(t)\stackrel{\tilde{\pr}}{\to}\tilde
{\zeta}(t),\qquad\tilde{\eta}_{T_n}(t)\stackrel{\tilde{\pr}}{\to}\tilde
{\eta}(t),\qquad
\sup_{0\leq t\leq L} \bigl\llvert  \tilde{\alpha}^{(k)}_{T_n}(t) \bigr\rrvert \stackrel{\tilde{\pr}}{\to} 0, \quad  k=0,1,2,\\
\end{array} %
\]
as $T_n\to\infty$.

An analogue of convergence~(\ref{o13}) holds for the processes $\tilde
{\zeta}_{T_n}(t)$ and $\tilde{\eta}_{T_n}(t)$. Therefore, according to the
well-known result of Prokhorov (Lemma~1.11 in \cite{paper12}), we conclude
that for any $L>0$
%
\begin{equation}
\label{o20} \sup_{0\leq t\leq L} \bigl\llvert \tilde{\zeta}_{T_n}(t)-
\tilde{\zeta }(t) \bigr\rrvert \stackrel{\tilde{\pr}} {\to} 0, \qquad\sup_{0\leq t\leq L}
\bigl\llvert \tilde{\eta}_{T_n}(t)-\tilde {\eta}(t) \bigr\rrvert
\stackrel{\tilde{\pr}} {\to} 0,
\end{equation}
as $T_n\to\infty$.

According to Lemma~\ref{lm3}, we can pass to the limit in~(\ref{o19}) and
obtain
\[
\tilde{\zeta} (t)=y_0+\int_{0}^{t}a_0
\bigl(\tilde{\zeta}(s) \bigr)\, ds+\tilde{\eta}(t),
\]
where $\tilde{\eta} (t)$ is the almost surely continuous martingale with the
quadratic characteristic
\[
\langle\tilde{\eta}\rangle(t)=\int_{0}^{t}\sigma_0^2
\bigl(\tilde {\zeta}(s) \bigr)\,ds.
\]

Now, it is well known, that the latter representation provides the existence
of a Wiener process $\hat{W}(t)$ such that
\[
\tilde{\eta}(t)=\int_{0}^{t}\sigma_0 \bigl(
\tilde{ \zeta}(s) \bigr)\,d\hat W(s).
\]

Thus, the process $ (\tilde{\zeta}, \hat{W} )$ satisfies equation
(\ref{o33}), and the processes $\tilde{\zeta}_{T_n}(t)$ weakly converge, as
$T_n\to\infty$, to the process $\tilde{\zeta}$. Since the sequence
$T^{'}_n\to\infty$ is arbitrary and since a solution to equation~(\ref{o33})
is weakly unique, the proof of the Theorem~\ref{th2} is complete.
\end{proof}

The proof of Theorems~\ref{th3}--\ref{th4}, \ref{th6}--\ref{th7} is performed
similarly to the proof of the corresponding theorems in \cite{paperL} with
some differences that we discuss below.

\begin{zau}\label{zth4}
The proof of Theorem~\ref{th4} differs from the proof of Theorem~2.3 in
\cite{paperL} by using other representation for the functional
$\beta_T^{(1)} (t)=\int_{0}^{t} g_T(\xi_T(s))\,ds$. In this case we
have
\[
{\beta}_{T}^{(1)} (t)= 2\int_{G_T(x_0)}^{\zeta_T(t)}g_0(u)
\,du -2\int_{0}^{t}g_0 \bigl({ \zeta}_{T}(s)
\bigr)\,d\eta_T(s)+\gamma ^{(1)}_T(t)-
\gamma^{(2)}_T(t)- \gamma^{(0)}_T(t),
\]
where
\[
\gamma^{(1)}_T(t)=\int_{x_0}^{\xi_T(t)}
\hat{q}_T(u)\,du,\qquad \gamma^{(2)}_T(t)=\int_{0}^{t}
\hat{q}_T \bigl({\xi}_{T}(s) \bigr)\,dW_T(s),
\]
\[
\gamma^{(0)}_T(t)= \int_{0}^{t}
\varPhi'_T \bigl(\xi_T(s) \bigr)
\bigl[a_T \bigl(s,\xi _T(s) \bigr)-\hat{a}_T
\bigl(\xi_T(s) \bigr) \bigr]\,ds,
\]
\[
\varPhi_T(x)=2\int_{0}^{x}f'_T(u)
\biggl(\int_{0}^{u}\frac
{g_T(v)}{f'_T(v)}\,dv \biggr)\,du,
\]
\[
\hat{q}_T(x)=\varPhi'_T(x)-2g_0
\bigl(G_T(x) \bigr)G_T'(x),
\]
$f_T'(x)$ is the derivative of the function $f_T(x)$ defined by equality
(\ref{o3}).

The latter representation differs from the corresponding representation in
\cite{paperL} by the term $\gamma^{(0)}_T(t)$. For any constants
$\varepsilon>0$, $N>0$ and $L>0$, we have the inequalities
\begin{align*}
\pr \Bigl\{\sup_{0\leq t\leq L} \bigl\llvert \gamma ^{(0)}_T(t)
\bigr\rrvert >\varepsilon \Bigr\}&\leq P_{\mathit{NT}} +\frac{2}{\varepsilon}\int
_{0}^{L}\M \bigl\llvert\varPhi'_T
\bigl(\xi_T(s) \bigr) \bigr\rrvert
\\
&\quad \times \bigl\llvert a_T \bigl(s,\xi_T(s)
\bigr)-\hat{a}_T \bigl(\xi_T(s) \bigr) \bigr\rrvert
\chi_{\llvert \xi_T(s)\rrvert \leq N}\,ds
\\
&\leq P_{\mathit{NT}}+\frac{2}{\varepsilon}C_N\int
_{0}^{L}\sup_{x}
\bigl[a_T(s,x)-\hat{a}_T(x) \bigr]\,ds,
\end{align*}
where $P_{\mathit{NT}}=\pr\{\sup_{0\leq t\leq L} |\xi
_{T}(t)|>N\}$. Using condition~3) from Definition~\ref{oz1} and
inequality~(\ref{o10}), we obtain the convergence $\lim_{N\to\infty}\;\overline{\lim_{T \to\infty}}\;P_{\mathit{NT}}=0$.
Using the assumptions of Theorem~\ref{th4}, we conclude that
%
\begin{equation}
\label{o0} \sup_{0\leq t\leq L} \bigl\llvert \gamma^{(0)}_T(t)
\bigr\rrvert \stackrel{\pr} {\to} 0
\end{equation}
for any $L>0$, as $T\to\infty$. The rest of the proof of Theorem~\ref {th4}
is the same as that of Theorem~2.3 in~\cite{paperL}.
\end{zau}

The proofs of Theorem~\ref{th3} and Theorems~\ref{th6}--\ref{th7} are
literally the same as those of Theorem~2.2 and Theorems~2.4--2.5 from
\cite{paperL}.

\begin{proof}[Proof of Theorem~\ref{th5}]
For the functional $\beta_T^{(1)} (t)=\int_{0}^{t} g_T(\xi
_T(s))\,ds$, with probability one, for all $t\geq0$, we have the
representation
\[
\begin{array}{l}
{\beta}_{T}^{(1)} (t)=2c_0\int_{0}^{t}\hat{a}_T(\xi_T(s))\,
ds+\gamma_T(t)-\eta^{(1)}_T(t)-\gamma^{(0)}_T(t)+\gamma^{(3)}_T(t),\\
\end{array} %
\]
where
\begin{align*}
\gamma_T(t)&=2\int_{x_0}^{\xi_T(t)}
\Biggl[f'_T(u)\int_{0}^{u}
\frac{g_T(v)}{f'_T(v)}\,dv-c_0 \Biggr]\,du,
\\
\eta^{(1)}_T(t)&=\int_{0}^{t}
\bigl[\varPhi'_T \bigl(\xi_T(s)
\bigr)-2c_0 \bigr]\,dW_T(s),
\\
\gamma^{(3)}_T(t)&= 2c_0\int
_{0}^{t} \bigl[a_T \bigl(s,
\xi_T(s) \bigr)-\hat {a}_T \bigl(\xi_T(s)
\bigr) \bigr]\,ds,
\end{align*}
$\gamma^{(0)}_T(t)$ and $\varPhi_T(x)$ are defined in Remark~\ref{zth4}.\vadjust{\eject}

The functions $\hat{a}_T(x)$ satisfy condition~(\ref{A3}). Thus, using Lemma~\ref{lm2}, we conclude that for any $L>0$
\[
\sup_{0\leq t\leq L} \biggl\llvert \int_{0}^{t}
\hat{a}_T \bigl(\xi_T(s) \bigr)\, ds \biggr\rrvert
\stackrel{\pr} {\to} 0,
\]
as $T\to\infty$.

For any constants $\varepsilon>0$, $N>0$ and $L>0$, we have the inequalities
\begin{align*}
\pr \Bigl\{\sup_{0\leq t\leq L} \bigl\llvert \gamma_T(t) \bigr
\rrvert >\varepsilon \Bigr\}&\leq P_{\mathit{NT}}+\frac{1}{\varepsilon}\M\sup_{0\leq t\leq L} \biggl\llvert \int_{x_0}^{\xi_T(t)} \bigl[
\varPhi'_T(u)-2c_0 \bigr]\,du \biggr
\rrvert \chi_{\{\llvert \xi_T(t)\rrvert \leq N\}}
\\
&\leq P_{\mathit{NT}}+\frac{2}{\varepsilon}N \sup_{|x|\leq
N} \biggl\llvert \;\;
\int_{x_0}^{x} \biggl[f'_T(u) \int_{0}^{u}\frac
{g_T(v)}{f'_T(v)}\,dv-c_0 \biggr]\,du
\biggr\rrvert ,
\end{align*}
where $P_{\mathit{NT}}$ is the same as that in Remark~\ref{zth4}. Using the latter
inequality and the assumptions of Theorem~\ref{th5}, we conclude that
\[
\sup_{0\leq t\leq L} \bigl\llvert \gamma_T(t) \bigr\rrvert
\stackrel{\pr} {\to} 0,
\]
as $T\to\infty$.

Since the term $\gamma^{(0)}_T(t)$ is the same as that in Remark~\ref {zth4},
we have~(\ref{o0}).

The inequality
\[
\sup_{0\leq t\leq L} \bigl\llvert \gamma^{(3)}_T(t)
\bigr\rrvert \leq2c_0\int_{0}^{L}\sup_{x}
\bigl[a_T(s,x)-\hat{a}_T(x) \bigr]\,ds
\]
implies that for any $L>0$
\[
\sup_{0\leq t\leq L} \bigl\llvert \gamma^{(3)}_T(t)
\bigr\rrvert \stackrel{\pr} {\to} 0,
\]
as $T\to\infty$.

Thus, we have for any $L>0$
\[
\sup_{0\leq t\leq L} \bigl\llvert {\beta}_{T}^{(1)}
(t)+\eta^{(1)}_T(t) \bigr\rrvert \stackrel{\pr} {\to} 0,
\]
as $T\to\infty$.

It is clear that $\eta^{(1)}_T(t)$ is the almost surely continuous martingale
with the quadratic characteristic
\[
\bigl\langle{\eta^{(1)}_T} \bigr\rangle(t)=4b_0^2t+
\int_{0}^{t}q_T \bigl(\xi _T(s) \bigr)
\,ds,
\]
where $q_T(x)= [\varPhi'_T(x)-2c_0 ]^2-4b_0^2$. The functions $q_T(x)$
satisfy condition~(\ref{A3}). Thus, using Lemma~\ref{lm2}, we conclude that for
any $L>0$
\[
\sup_{0\leq t\leq L} \bigl\llvert \bigl\langle{\eta^{(1)}_T}
\bigr\rangle (t)-4b_0^2t \bigr\rrvert \stackrel{\pr} {
\to} 0,
\]
as $T\to\infty$.

Then, using a random change of time (see \cite{book2}, Part I, \S4), we
obtain $\eta^{(1)}_T(t)=W_T^{*} (\langle{\eta^{(1)}_T}\rangle
(t) )$, where $W_T^{*}(t)$ is a Wiener process. The same arguments as
those used to get (9) in \cite{paper88} yield that
\[
\sup_{0\leq t\leq L} \bigl\llvert {\beta}_{T}^{(1)}
(t)-W_T^{*} \bigl(4b_0^2t \bigr)
\bigr\rrvert \stackrel{\pr} {\to} 0,
\]
as $T\to\infty$. Thus, the processes ${\beta}_{T}^{(1)} (t)$ weakly converge,
as $T\to\infty$, to the process $2b_0 W(t)$.
\end{proof}

\section{Auxiliary results}

\begin{lem}\label{lm1}
Let $\xi_T$ be a solution to equation~(\ref{o1}) from the class $K  (G_T
 )$. Then, for any $N>0$, $L>0$ and any Borel set $B\subset
 [-N;N ]$, there exists a constant $C_L$ such that
\[
\int_{0}^{L}\pr \bigl\{G_T \bigl(
\xi_T(s) \bigr)\in B \bigr\}\,ds\leq C_L \psi \bigl(
\lambda(B) \bigr),
\]
where $\lambda(B)$ is the Lebesgue measure of the set $B$, $\psi
(|x| )$ is a certain bounded function satisfying $\psi
(|x| )\to0$ as $|x|\to0$.
\end{lem}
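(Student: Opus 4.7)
The plan is to adapt the scale-function construction used in \cite{paperL} to the inhomogeneous setting by producing an auxiliary function $V_T$ whose homogeneous generator reproduces the indicator $\chi_B(G_T(\cdot))$. Concretely, I set $V_T(x) = 2\int_0^x f_T'(u) \int_0^u \chi_B(G_T(v))/f_T'(v)\,dv\,du$; differentiating and using the identity $f_T''(x) = -2\hat{a}_T(x) f_T'(x)$, which is immediate from~(\ref{o3}), one verifies that
\[
\tfrac{1}{2} V_T''(x) + \hat{a}_T(x) V_T'(x) = \chi_B \bigl(G_T(x) \bigr)\quad\text{a.e.}
\]
Since $V_T'$ is continuous and $V_T''$ is locally integrable, the It\^{o} formula applies to $V_T(\xi_T(t))$ and produces
\[
V_T \bigl(\xi_T(t) \bigr) = V_T(x_0) + \int_0^t \chi_B \bigl(G_T(\xi_T(s)) \bigr)\,ds + \int_0^t V_T' \bigl(\xi_T(s) \bigr)\,\Delta a_T(s)\,ds + \int_0^t V_T' \bigl(\xi_T(s) \bigr)\,dW_T(s),
\]
with $\Delta a_T(s) = a_T(s,\xi_T(s)) - \hat{a}_T(\xi_T(s))$. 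Rearranging and taking expectation at $t=L$ kills the stochastic integral, which is a genuine martingale by virtue of the polynomial bound on $V_T'$ derived below together with the moment estimates of Theorem~\ref{th1}.

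It remains to estimate each deterministic term. From condition~(\ref{A2}) applied with the given Borel set $B\subset[-N,N]$, one has $|V_T'(x)| \leq 2\psi(\lambda(B))[1+|x|^m]$, and integrating gives $|V_T(x)| \leq C \psi(\lambda(B))[|x|+|x|^{m+1}]$. Condition~3) of Definition~\ref{oz1}, namely $|G_T(x)| \geq C|x|^\alpha$, combined with~(\ref{o10}), then produces uniform-in-$T$ bounds on $\M\sup_{0\le t\le L}|\xi_T(t)|^p$ for every $p>0$; hence $\M|V_T(\xi_T(L))| + |V_T(x_0)|$ is controlled by a constant multiple of $\psi(\lambda(B))$ uniformly in $T$. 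The drift-correction term, which is the genuinely new object compared to the homogeneous setting of \cite{paperL}, I would bound via
\[
\M\int_0^L \bigl|V_T' \bigl(\xi_T(s) \bigr) \bigr|\, \bigl|\Delta a_T(s) \bigr|\,ds \leq 2\psi \bigl(\lambda(B) \bigr)\,\M\Bigl[\sup_{0\le s\le L} \bigl(1+ \bigl|\xi_T(s) \bigr|^m \bigr)\Bigr]\int_0^L \sup_x \bigl|a_T(s,x)-\hat{a}_T(x) \bigr|\,ds,
\]
where the last integral is uniformly bounded in $T$ because~(\ref{A0}) forces it to tend to $0$. Assembling the three estimates yields $\int_0^L \pr\{G_T(\xi_T(s))\in B\}\,ds \leq C_L \psi(\lambda(B))$ with $C_L$ independent of $T$ and $B$.

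The principal obstacle, relative to the homogeneous treatment of \cite{paperL}, is precisely the new drift-correction integral $\int_0^t V_T'(\xi_T(s))\Delta a_T(s)\,ds$ produced by the inhomogeneous equation~(\ref{o1}); condition~(\ref{A0}), together with the moment bounds from Theorem~\ref{th1} and the polynomial growth of $V_T'$ delivered by~(\ref{A2}), is exactly what makes this term tractable uniformly in $T$.
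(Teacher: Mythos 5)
Your proposal is correct and follows essentially the same route as the paper: the paper's proof uses exactly the auxiliary function $\varPhi_T(x)=2\int_0^x f_T'(u)\bigl(\int_0^u \chi_B(G_T(v))/f_T'(v)\,dv\bigr)\,du$ (your $V_T$), the identity $\varPhi_T'\hat a_T+\tfrac12\varPhi_T''=\chi_B(G_T(\cdot))$, It\^o's formula, and expectation to remove the stochastic integral. The remaining terms are bounded just as you do, via~(\ref{A2}), condition~3) of Definition~\ref{oz1} with the moment bound~(\ref{o10}), and condition~(\ref{A0}) for the drift-correction term $\M\alpha_T(L)$.
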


\begin{proof}
Consider the function
\[
\varPhi_T(x)=2\int_{0}^{x}f'_T(u)
\biggl(\int_{0}^{u}\frac{\chi
_B (G_T(v) )}{f'_T(v)}\,dv \biggr)\,du.
\]

The function $\varPhi_T(x)$ is continuous, the derivative $\varPhi'_T(x)$ of this
function is continuous and the second derivative $\varPhi''_T(x)$ exists a.e.
with respect to the Lebesgue measure and is locally integrable. Therefore, we
can apply the It\^{o} formula to the process $\varPhi_T(\xi_T(t))$, where
$\xi_T(t)$ is a solution to equation~(\ref{o1}).

Furthermore,
\[
\varPhi'_T(x)\hat{a}_T(x)+
\frac{1}{2}\varPhi''_T(x)=
\chi_B(x),
\]
a.e. with respect to the Lebesgue measure. Using the It\^{o} formula and the
latter equality, we conclude that
\[
\int_{0}^{L}\chi_B \bigl( \zeta_T(s)
\bigr)\,ds=\varPhi_T \bigl(\xi _T(L) \bigr)-
\varPhi_T(x_0)- \int_{0}^{L}
\varPhi'_T \bigl(\xi_T(s) \bigr)
\,dW_T(s)-\alpha_T(L),
\]
with probability one for all $t\geq0$, where $\zeta_T (t)=G_T(\xi _T(t))$,
\[
\alpha_T(t)= \int_{0}^{t}\varPhi'_T
\bigl(\xi_T(s) \bigr) \bigl[a_T \bigl(s,\xi
_T(s) \bigr)-\hat{a}_T \bigl(\xi_T(s)
\bigr) \bigr]\,ds.
\]
Hence, using the properties of stochastic integrals, we obtain that
%
\begin{equation}
\label{l2} \int_{0}^{L}\pr \bigl\{\zeta_T (s)\in B
\bigr\}\,ds=\M \bigl[\varPhi_T \bigl(\xi _T(L) \bigr)-
\varPhi_T(x_0) \bigr]-\M\alpha_T(L).
\end{equation}

According to condition~(\ref{A2}), inequalities $|G_T(x)|\geq C |x|^{\alpha }$,
$C>0$, $\alpha>0$ and~(\ref{o10}), we have
\[
\bigl\llvert \M \bigl[\varPhi_T \bigl(\xi_T(L) \bigr)-
\varPhi_T(x_0) \bigr] \bigr\rrvert \leq
C^{(1)}_L \psi \bigl(\lambda(B) \bigr),
\]
for a certain constant $C^{(1)}_L$. Condition~(\ref{A0}) implies that
\[
\bigl\llvert \M\alpha_T(L) \bigr\rrvert \leq C^{(2)}_L
\psi \bigl(\lambda(B) \bigr),
\]
for a certain constant $C^{(2)}_L$. Here the function
$\psi (\lambda(B) )$ is from condition~(\ref{A2}). The latter
inequalities and equality~(\ref{l2}) prove Lemma~\ref{lm1}.
\end{proof}

\begin{lem}\label{lm2} Let $\xi_T$ be a solution to equation~(\ref{o1})
from the class $K (G_T  )$. If, for measurable locally bounded
functions $q_T(x)$, condition~(\ref{A3}) holds, then, for any $L>0$,
\[
\sup_{0\leq t\leq L} \biggl\llvert \int_{0}^{t}
q_T \bigl(\xi_T(s) \bigr)\,ds \biggr\rrvert \stackrel{
\pr} {\to} 0,
\]
as $T\to\infty$.
\end{lem}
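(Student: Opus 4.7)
The natural approach mirrors the proof of Lemma~\ref{lm1}: construct an auxiliary function $Q_T$ whose action under the infinitesimal generator $\hat{a}_T(x)\frac{d}{dx}+\frac{1}{2}\frac{d^2}{dx^2}$ of the homogeneous equation~(\ref{o2}) produces exactly $q_T(x)$. Specifically, I would define
\[
Q_T(x)=2\int_{0}^{x} f_T'(u)\int_{0}^{u}\frac{q_T(v)}{f_T'(v)}\,dv\,du.
\]
Using $f_T''(x)=-2\hat{a}_T(x)f_T'(x)$, a direct differentiation shows $Q_T'(x)=2f_T'(x)\int_{0}^{x}\frac{q_T(v)}{f_T'(v)}\,dv$ and the identity $\hat{a}_T(x)Q_T'(x)+\frac{1}{2}Q_T''(x)=q_T(x)$ a.e., while from the crude bound $\bigl\llvert Q_T(x)\bigr\rrvert \leq 2\llvert x\rrvert \sup_{\llvert u\rrvert \leq \llvert x\rrvert }\bigl\llvert f_T'(u)\int_{0}^{u}q_T(v)/f_T'(v)\,dv\bigr\rrvert$ condition~(\ref{A3}) yields $\sup_{\llvert x\rrvert \leq N}\bigl(\llvert Q_T(x)\rrvert +\llvert Q_T'(x)\rrvert \bigr)\to 0$ as $T\to\infty$ for every fixed $N>0$.

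Next I would apply the It\^o formula to $Q_T(\xi_T(t))$ (justified as in the proof of Theorem~\ref{th1}), and split $a_T(s,x)=\hat{a}_T(x)+[a_T(s,x)-\hat{a}_T(x)]$ to obtain, with probability one,
\[
\int_{0}^{t}q_T\bigl(\xi_T(s)\bigr)\,ds = Q_T\bigl(\xi_T(t)\bigr)-Q_T(x_0)-\int_{0}^{t}Q_T'\bigl(\xi_T(s)\bigr)\Delta a_T(s)\,ds-\int_{0}^{t}Q_T'\bigl(\xi_T(s)\bigr)\,dW_T(s),
\]
where $\Delta a_T(s)=a_T(s,\xi_T(s))-\hat{a}_T(\xi_T(s))$. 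The task now reduces to showing that each of the four terms on the right is uniformly small in $t\in[0,L]$ with probability tending to $1$.

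To handle this I would localize on the event $\{\sup_{0\leq s\leq L}\llvert \xi_T(s)\rrvert \leq N\}$. Its complementary probability $P_{NT}$ is controlled uniformly in $T$ by combining $\llvert G_T(x)\rrvert \geq C\llvert x\rrvert ^{\alpha}$ (condition~3 of Definition~\ref{oz1}) with inequality~(\ref{o10}) from Theorem~\ref{th1}, so $\overline{\lim}_{T\to\infty}P_{NT}\to 0$ as $N\to\infty$. On this event $\bigl\llvert Q_T(\xi_T(t))-Q_T(x_0)\bigr\rrvert \leq 2\sup_{\llvert x\rrvert \leq N}\llvert Q_T(x)\rrvert \to 0$ by the bound above; the drift remainder is dominated by $\sup_{\llvert x\rrvert \leq N}\llvert Q_T'(x)\rrvert \cdot\int_{0}^{L}\sup_x\llvert a_T(s,x)-\hat{a}_T(x)\rrvert \,ds$, a product of two terms each tending to $0$ by~(\ref{A3}) and~(\ref{A0}); and the stochastic integral is estimated by Doob's maximal inequality after truncating the integrand by $\chi_{\{\llvert \xi_T(s)\rrvert \leq N\}}$, giving
\[
\M\sup_{0\leq t\leq L}\biggl\llvert \int_{0}^{t}Q_T'\bigl(\xi_T(s)\bigr)\chi_{\{\llvert \xi_T(s)\rrvert \leq N\}}\,dW_T(s)\biggr\rrvert ^{2}\leq 4L\sup_{\llvert x\rrvert \leq N}\bigl[Q_T'(x)\bigr]^{2}\longrightarrow 0.
\]
Sending first $T\to\infty$ and then $N\to\infty$ yields the claim.

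The main obstacle I anticipate is not any single estimate but rather the coordination of the three different smallness mechanisms: the $(A_3)$-driven vanishing of $Q_T$ and $Q_T'$ on compacta, the $(A_0)$-driven vanishing of the drift perturbation, and the a~priori tightness of $\xi_T$ inherited via Theorem~\ref{th1} from $G_T$. The genuine novelty compared with Lemma~2.1 of \cite{paperL} is the appearance of the middle term $\int_{0}^{t}Q_T'(\xi_T(s))\Delta a_T(s)\,ds$, where one must verify that $(A_0)$ alone (without any quantitative rate) interacts correctly with the $(A_3)$-smallness of $Q_T'$; this is precisely where care is needed to avoid circular use of estimates depending on $L_T$.
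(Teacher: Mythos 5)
Your proposal is correct and follows essentially the same route as the paper: you construct the same auxiliary function (the paper's $\varPhi_T$, your $Q_T$), apply the It\^o formula to get the identical four-term decomposition including the extra drift-remainder term $\int_0^t Q_T'(\xi_T(s))\,\Delta a_T(s)\,ds$, and then kill each term by localizing on $\{\sup_{0\le s\le L}|\xi_T(s)|\le N\}$ (with $P_{NT}$ controlled through condition~3) of Definition~\ref{oz1} and~(\ref{o10})), using~(\ref{A3}) for the boundary and martingale terms and~(\ref{A0}) for the remainder. This matches the paper's argument, which only differs in being terser and delegating the final estimates to the corresponding lemma of \cite{paperL}.
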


\begin{proof}
Consider the function
\[
\varPhi_T(x)=2\int_{0}^{x}f'_T(u)
\biggl(\int_{0}^{u}\frac
{q_T(v)}{f'_T(v)}\,dv \biggr)\,du.
\]

The function $\varPhi_T(x)$ is continuous, the derivative $\varPhi'_T(x)$ of this
function is continuous and the second derivative $\varPhi''_T(x)$ exists a.e.
with respect to the Lebesgue measure and is locally integrable. Therefore, we
can apply the It\^{o} formula to the process $\varPhi_T(\xi_T(t))$, where
$\xi_T(t)$ is a solution to equation~(\ref{o1}).

Furthermore,
\[
\varPhi'_T(x)\hat{a}_T(x)+
\frac{1}{2}\varPhi''_T(x)=q_T(x),
\]
a. e. with respect to the Lebesgue measure. Using the latter equality, we
conclude that with probability one for all $t\geq0$
%
\begin{equation}
\label{l3} \int_{0}^{t}q_T \bigl(
\xi_T(s) \bigr)\,ds=\varPhi_T \bigl(
\xi_T(t) \bigr)- \varPhi_T(x_0)- \int_{0}^{t} \varPhi'_T \bigl(
\xi_T(s) \bigr) \,dW_T(s)-\alpha_T(t),
\end{equation}
where
\[
\alpha_T(t)= \int_{0}^{t}\varPhi'_T
\bigl(\xi_T(s) \bigr) \bigl[a_T \bigl(s,\xi
_T(s) \bigr)-\hat{a}_T \bigl(\xi_T(s)
\bigr) \bigr]\,ds.
\]

For any constants $\varepsilon>0$, $N>0$ and $L>0$, we have
\begin{align*}
\pr \Bigl\{\sup_{0\leq t\leq L} \bigl\llvert \alpha_T(t)
\bigr\rrvert >\varepsilon \Bigr\}&\leq P_{\mathit{NT}} +\frac{2}{\varepsilon}\sup
_{|x|\leq N}f'_T(x) \biggl\llvert \int_{0}^{x}
\frac{q_T(v)}{f'_T(v)}\,dv \biggr\rrvert
\\
&\quad \times\int_{0}^{L}\sup
_{x} \bigl[a_T(s,x)-\hat{a}_T(x)
\bigr]\,ds,
\end{align*}
where $P_{\mathit{NT}}=\pr \{\sup_{0\leq t\leq L}  \llvert \xi
_{T}(t) \rrvert >N \}$.\vadjust{\eject}

The same arguments as those used in \cite{paperL} (see the proof of Lemma~4.2) and the assumptions of Lemma~\ref{lm2} yield that
\begin{eqnarray*}
&\displaystyle \sup_{0\leq t\leq L} \bigl\llvert \alpha_T(t)
\bigr\rrvert\stackrel{\pr} {\to} 0,&
\\
&\displaystyle \sup_{0\leq t\leq L} \bigl\llvert \varPhi_T
\bigl(\xi_T(t) \bigr)-\varPhi_T(x_0) \bigr
\rrvert\stackrel {\pr} {\to} 0,&
\\
&\displaystyle \sup_{0\leq t\leq L} \Biggl\llvert \int
_{0}^{t}\varPhi'_T \bigl(
\xi_T(s) \bigr)\,dW_T(s) \Biggr\rrvert \stackrel{\pr} {
\to} 0,&
\end{eqnarray*}
as $T\to\infty$. Thus, equality~(\ref{l3}) implies the statement of Lemma~\ref{lm2}.
\end{proof}

The statements and the proofs of the following lemmas are the same as those
of the corresponding lemmas from \cite{paperL}.

\begin{lem}\label{lm5}
Let $\xi_T$ be a solution to equation~(\ref{o1}) belonging to the class
$K (G_T  )$, and let the stochastic process $ (\zeta_T,\; \eta_T
 )$, with $\zeta_T (t)=G_T(\xi_T(t)) $ and $\eta_T(t)=\int_{0}^{t}G_T'(\xi_T(s))\,dW_T(s)$ be stochastically equivalent to
the process $ (\tilde{\zeta}_T,\;\tilde{\eta}_T  )$. Then the
process
\[
\int_{0}^{t}g \bigl(\zeta_T(s) \bigr)\,ds+ \int_{0}^{t}q \bigl(\zeta_T(s) \bigr)\, d
\eta_T(s),
\]
where $g(x)$ and $q(x)$ are measurable locally bounded functions, is
stochastically equivalent to the process
\[
\int_{0}^{t}g \bigl(\tilde{\zeta}_T(s) \bigr) \,ds+
\int_{0}^{t}q \bigl(\tilde {\zeta}_T(s) \bigr)\,d
\tilde{\eta}_T(s).
\]
\end{lem}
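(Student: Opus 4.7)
The plan is to reduce the two path-transformations in the lemma to limits of functionals that depend only on the finite-dimensional joint distributions of $(\zeta_T,\eta_T)$, so that the hypothesis of stochastic equivalence transfers automatically to the limits. Both $\zeta_T=G_T(\xi_T)$ and $\eta_T=\int_0^\cdot G_T'(\xi_T(s))\,dW_T(s)$ have continuous paths, and since stochastic equivalence identifies the laws on $C([0,L];\R^2)$, the companion pair $(\tilde\zeta_T,\tilde\eta_T)$ also has continuous paths and $\tilde\eta_T$ is a continuous square-integrable martingale (on its own space) with the same law of the quadratic characteristic.

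For the Lebesgue term $\int_0^t g(\zeta_T(s))\,ds$, I would first localize to the event $\{\sup_{0\le s\le L}|\zeta_T(s)|\le N\}$, whose complement is arbitrarily small uniformly in $T$ by the tightness estimate~(\ref{o12}). On this event $g(\zeta_T(\cdot))$ is bounded, and I would approximate $g$ on $[-N,N]$ by continuous functions $g_n\to g$ in $L^1$. For each $g_n$, the functional $\zeta\mapsto\int_0^t g_n(\zeta(s))\,ds$ is continuous on $C([0,L];\R)$, hence has equal law under $\zeta_T$ and $\tilde\zeta_T$. Transferring the approximation to the limit uses dominated convergence together with Lemma~\ref{lm1} applied to the sets $\{g_n\neq g\}$ to control the tail in probability.

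For the stochastic term, after a parallel reduction of $q$ to bounded continuous $q_n$ on $[-N,N]$, introduce partitions $0=t_0^m<t_1^m<\cdots<t_m^m=t$ with vanishing mesh and form the Riemann sum
\[
S_m(q_n,\zeta_T,\eta_T)=\sum_{k=0}^{m-1}q_n\bigl(\zeta_T(t_k^m)\bigr)\bigl[\eta_T(t_{k+1}^m)-\eta_T(t_k^m)\bigr].
\]
This is a Borel function of finitely many coordinates of $(\zeta_T,\eta_T)$, so by the equivalence hypothesis it has the same distribution as the analogous sum constructed from $(\tilde\zeta_T,\tilde\eta_T)$. As $m\to\infty$, $S_m(q_n,\zeta_T,\eta_T)$ converges in $\pr$-probability to $\int_0^t q_n(\zeta_T(s))\,d\eta_T(s)$ by the classical approximation theorem for stochastic integrals against a continuous martingale with a.s.\ finite quadratic characteristic $\langle\eta_T\rangle(t)=\int_0^t [G_T'(\xi_T(s))]^2\,ds$ (finiteness on $[0,L]$ coming from condition~(\ref{A1}) and inequality~(\ref{o10})); the same convergence holds on the tilded space for the same reasons. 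Passing to the limit in $m$ and then in $n$, and finally sending $N\to\infty$, yields the required distributional equality for the stochastic integral part.

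The delicate point is that convergence in probability must be transferred across two different probability spaces. This rests on the following general principle, which I would invoke at the end: if $X_n\stackrel{d}{=}Y_n$ for each $n$ and $X_n\to X$, $Y_n\to Y$ each in probability on its own space, then $X\stackrel{d}{=}Y$, since convergence in probability implies convergence in law and weak limits are unique. Combining this with the two-part decomposition above and with Theorem~\ref{th1} (which supplies the uniform control $\sup_{[0,L]}|\zeta_T|<\infty$ a.s.\ needed to localize) completes the argument.
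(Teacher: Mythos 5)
Your argument is correct and follows the standard route that the paper itself relies on: the paper gives no separate proof of this lemma, deferring to the corresponding lemma of \cite{paperL}, whose proof is exactly this discretization scheme — approximate both integrals by functionals of finitely many values of $(\zeta_T,\eta_T)$ (Riemann-type sums, after localizing and smoothing $g,q$ with the occupation bound of Lemma~\ref{lm1}), use that such functionals have identical laws under stochastically equivalent pairs, and transfer the limits in probability across the two spaces. Minor polish only: carry out the scheme for vectors of time points so you get equality of all finite-dimensional distributions, and replace $\{g_n\neq g\}$ by $\{|g_n-g|>\delta\}$ when invoking Lemma~\ref{lm1}.
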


\begin{lem}\label{lm3}
Let $\xi_T$ be a solution to equation~(\ref{o1}) from the class $K  (G_T
 )$, and let $\zeta_T (t)=G_T(\xi_T(t))\stackrel{\pr}{\to }\zeta(t)$ as
$T\to\infty$. Then for any measurable locally bounded function $g(x)$, we
have the convergence
\[
\sup_{0\leq t\leq L} \biggl\llvert \int_{0}^{t} g
\bigl( \zeta_T(s) \bigr)\,ds-\int_{0}^{t} g \bigl(
\zeta(s) \bigr)\,ds \biggr\rrvert \stackrel{\pr} {\to} 0,
\]
as $T\to\infty$ for any constant $L>0$.
\end{lem}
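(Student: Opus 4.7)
\textbf{Proof plan for Lemma~\ref{lm3}.} The difficulty is that $g$ is only measurable and locally bounded, so the continuous mapping theorem does not apply directly. My plan is to approximate $g$ on compact sets by a continuous function $g_\delta$, control the occupation time of $\zeta_T$ in the ``bad set'' $\{g\ne g_\delta\}$ via Lemma~\ref{lm1}, and then transfer that bound to the limit $\zeta$ through a Fatou/lower-semicontinuity argument.

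First I would localize. Fixing $L>0$ and $\varepsilon>0$, Theorem~\ref{th1} gives weak compactness (hence tightness) of $\zeta_T$, and combined with the hypothesis $\zeta_T\stackrel{\pr}{\to}\zeta$, understood as uniform convergence on $[0,L]$ in probability (as in~(\ref{o20})), I can find $N>0$ such that $\sup_T\pr \{\sup_{0\le t\le L}\llvert\zeta_T(t)\rrvert>N \}<\varepsilon$ and $\pr \{\sup_{0\le t\le L}\llvert\zeta(t)\rrvert>N \}<\varepsilon$. Off these events both processes take values in $[-N,N]$, where $\llvert g\rrvert\le M:=\sup_{\llvert x\rrvert\le N}\llvert g(x)\rrvert<\infty$. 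Next, for $\delta>0$ I would invoke Lusin's theorem together with outer regularity and Tietze extension to obtain a continuous $g_\delta:\R\to\R$ with $\llvert g_\delta\rrvert\le M$ and an \emph{open} set $B_\delta\subset[-N,N]$ of Lebesgue measure $\lambda(B_\delta)<\delta$ for which $g=g_\delta$ on $[-N,N]\setminus B_\delta$. I would then split
\[
\sup_{0\le t\le L}\biggl\llvert\int_0^t g \bigl(\zeta_T(s) \bigr)\,ds-\int_0^t g \bigl(\zeta(s) \bigr)\,ds\biggr\rrvert\le I_1^T+I_2^T+I_3,
\]
where $I_1^T=\int_0^L\llvert g-g_\delta\rrvert(\zeta_T(s))\,ds$, $I_2^T=\int_0^L\llvert g_\delta(\zeta_T(s))-g_\delta(\zeta(s))\rrvert\,ds$, and $I_3=\int_0^L\llvert g-g_\delta\rrvert(\zeta(s))\,ds$.

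For $I_1^T$, on the event $\{\sup_{s\le L}\llvert\zeta_T(s)\rrvert\le N\}$ one has $I_1^T\le 2M\int_0^L\chi_{B_\delta}(\zeta_T(s))\,ds$, so Lemma~\ref{lm1} yields $\M I_1^T\chi_{\{\sup\llvert\zeta_T\rrvert\le N\}}\le 2MC_L\psi(\delta)$. For $I_2^T$, since $g_\delta$ is uniformly continuous on $[-N,N]$ and $\zeta_T\to\zeta$ uniformly on $[0,L]$ in probability, bounded convergence gives $I_2^T\stackrel{\pr}{\to}0$ as $T\to\infty$. For $I_3$, I would pass to an a.s.\ uniformly convergent subsequence $\zeta_{T_n}\to\zeta$; because $B_\delta$ is open, the lower semicontinuity $\chi_{B_\delta}(\zeta(s))\le\liminf_n\chi_{B_\delta}(\zeta_{T_n}(s))$ holds pointwise, so Fatou combined with Lemma~\ref{lm1} gives the analogous bound $\M I_3\chi_{\{\sup\llvert\zeta\rrvert\le N\}}\le 2MC_L\psi(\delta)$.

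Sending $T\to\infty$ first, then $\delta\to 0$ (using $\psi(\delta)\to 0$), and finally $\varepsilon\to 0$ would complete the proof. The hard part is the estimate for $I_3$, since Lemma~\ref{lm1} applies only to $\zeta_T=G_T(\xi_T)$ and there is no direct occupation-time bound for the limit $\zeta$. The crucial trick is to take the approximating bad set $B_\delta$ \emph{open} rather than merely measurable, which makes $\chi_{B_\delta}$ lower semicontinuous and thereby allows Fatou's lemma to transport the occupation-time bound from $\zeta_T$ to $\zeta$.
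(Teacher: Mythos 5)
Your argument is correct and is essentially the route the paper takes: the paper defers the proof to the corresponding lemma of \cite{paperL}, whose argument is precisely this combination of a Lusin/Tietze approximation of $g$ on a compact range, the occupation-time estimate of Lemma~\ref{lm1} for $\zeta_T$, and a transfer of that estimate to the limit $\zeta$ via an open bad set and Fatou (equivalently, the portmanteau inequality for open sets). The only remark is that you read the hypothesis $\zeta_T(t)\stackrel{\pr}{\to}\zeta(t)$ as uniform convergence on $[0,L]$ in probability, as in~(\ref{o20}); this matches how the lemma is applied, and for your terms $I_2^T$ and $I_3$ pointwise convergence in $t$ would in fact suffice.
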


\section{Examples}

We denote by $b_T$ the family of such constants that $b_T>1$ and
$b_T\uparrow\infty$ as $T\to\infty$.
\begin{ex}\label{ex1}

Consider equation~(\ref{o1}) with the drift coefficient with nonregular
dependence on the parameter $T$ of the form
\[
a_T(t,x)=b_T^{\gamma}\cos(x b_T) +
\frac{tb_T}{1+t^2b_T^2} \sin \bigl((x-1)b_T \bigr), \quad 0\leq\gamma<1.
\]

The family of measurable locally bounded real-valued functions $\hat {a}_T
(x)=\break b_T^{\gamma}\cos(x b_T)$ satisfies condition~1) from Definition~\ref{oz1}:
for any $L>0$
\[
\lim_{T\to\infty} \int_{0}^{L}\sup_{x} \bigl\llvert
a_T(t,x)-\hat {a}_T (x) \bigr\rrvert \,dt \leq \lim_{T\to\infty} \int_{0}^{L} \frac{tb_T}{1+t^2b_T^2} \,dt = 0.
\]

The rest of conditions from Definition~\ref{oz1} are fulfilled for the family
of functions
\[
G_T(x)=f_T(x)=\int_{0}^{x}\exp \biggl\{-2
\int_{0}^{u}\hat {a}_T(v)\,dv \biggr\}\,du, \quad T>0.
\]

Since $f_T'(x)=\exp \{-2\frac{b_T^{\gamma}}{b_T}\sin(x b_T)  \}$,
then there exist two constants $c_0$ and $\delta_0$ such that, for all
$x\in\R$, we have $0<\delta_0\leq f_T'(x)\leq c_0$. Taking into account that
$G_T(x)=\int_{0}^{x}f'_T(v)\,dv$, we obtain $G_T'(x)\hat
{a}_T(x)+\frac{1}{2}\,G_T''(x)\equiv0$. Therefore, the conditions
of Definition~\ref{oz1} 
are fulfilled as follows:
\begin{enumerate}[leftmargin=55pt]
\item[condition 2)]
\begin{align*}
& \biggl[ G_T'(x)a_T(t,x)+
\frac{1}{2}G_T''(x) \biggr]^2+
\bigl[G_T'(x) \bigr]^2
\\
&\quad = \biggl[ G_T'(x) \frac{tb_T}{1+t^2b_T^2} \sin
\bigl((x-1)b_T \bigr) \biggr]^2+ \bigl[
G_T'(x) \bigr]^2\leq 2
\bigl[G_T'(x) \bigr]^2
\\
&\quad \leq2c_0^2 \leq2c_0^2
\bigl[1+ \bigl\llvert G_T(x)\bigr\rrvert ^2 \bigr];
\\
&\bigl\llvert G_T(x_0)\bigr\rrvert = \biggl\llvert
\int_{0}^{x_0}f'_T(v) \,dv \biggr
\rrvert \leq c_0\cdot|x_0|= C;
\end{align*}

\item[condition 3)]
\[
\bigl\llvert G_T(x)\bigr\rrvert = \Biggl\llvert \int
_{0}^{x}f'_T(v) \,dv
\Biggr\rrvert \geq C |x|^{\alpha} \quad \mbox{with } C=\delta_0,\ \alpha=1;
\]

\item[condition 4)]
\begin{align*}
&\biggl\llvert \int_{0}^{x} f_T'(u) \biggl(
\int_{0}^{u}\frac{\chi_B (G_T(v) )}{f'_T(v)}\,dv \biggr)\,du \biggr\rrvert\\
&\quad \leq\frac{C_0}{\delta_0} \biggl\llvert \int_{0}^{x} \!\!\!\int _{0}^{u}\chi_B \bigl(G_T(v) \bigr)\,dv\,du \biggr\rrvert\,{\leq}\, C_1 \lambda(B) |x| \,{\leq}\,\psi \bigl(\lambda(B) \bigr) \bigl[1\,{+}\,|x|^m \bigr]
\end{align*}
with $\psi (|x| )= C_1 |x|$, $m=1$.
\end{enumerate}

Thus, equation~(\ref{o1}) belongs to the class $K (G_T  )$.
According to Theorem~\ref{th1}, the family of the processes
$\zeta_T(t)=G_T (\xi_T(t) )$ is weakly compact. We can find the form
of the limit process using Theorem~\ref{th2} with $a_0(x)\equiv0$,
$\sigma_0(x)\equiv1$. According to Theorem~\ref{th2}, the stochastic
processes $\zeta_T(t)$ weakly converge, as $T\to\infty$, to the solution
$\zeta(t)$ to equation~(\ref{o33}) and the limit process is
$\zeta(t)=x_0+\hat{W}(t)$, where $\hat{W}(t)$ is a Wiener process.
\end{ex}

\begin{ex}\label{ex2}
Let the conditions of Example~\ref{ex1} hold. For the family of functions
\[
g_T(x)=\frac{ b_T^{\gamma}}{1+b_T^2 x^2 }, \quad 0\leq\gamma<1,
\]
the assumptions of Theorem~\ref{th3} hold with $g_0(x)\equiv0$.

According to Theorem~\ref{th3}, the stochastic processes
\[
\beta_T^{(1)} (t)=\int_{0}^{t}
g_T \bigl(\xi_T(s) \bigr)\,ds=\int_{0}^{t}
\frac{ b_T^{\gamma}}{1+b_T^2 \xi_T^2(s) }\,ds, \quad 0\leq\gamma<1
\]
weakly converge, as $T\to\infty$, to the process $ \beta^{(1)} (t)\equiv0$.
\end{ex}

\section*{Acknowledgments}

The authors are deeply grateful to the referees for careful reading and
valuable comments and suggestions, which helped to improve essentially the
quality of the article.

\end{document}